
\documentclass[10pt, english]{elsarticle}
\usepackage{amsthm}
\usepackage{amsmath}
\usepackage{latexsym, amssymb}
\usepackage{txfonts}
\usepackage{mathtools}
\usepackage{color}
\usepackage{babel}
\usepackage[all]{xy}
\usepackage[capitalise]{cleveref}

\newtheorem{thm}{Theorem}[section] 

\newtheorem{cor}[thm]{Corollary}

\newtheorem{lem}[thm]{Lemma}
\newtheorem{prop}[thm]{Proposition}

\newtheorem{ques}[thm]{Question}

\theoremstyle{definition}

\newtheorem{defn}[thm]{Definition}

\newcommand\operA[2]{{\if!#2!\operatorname{#1}\else{\operatorname{#1}_{#2}^{\phantom{I}}}\fi}} 

%
%
%
%
%
%
%
%


\newcommand{\Trace}[1][]{\if!#1!\operatorname{Tr}\else{\operatorname{Tr}_{#1}^{\phantom{I}}}\fi} 

\long\def\forget#1\forgotten{{}} %

\def\({\left(}
\def\){\right)}


\newcommand\LAY[3][]{{\begin{array}{c}\mbox{#2} \if#1!{}\else{+}\fi \\ \mbox{#3}\end{array}}}

\makeatletter
\newcommand{\bigperp}{%
  \mathop{\mathpalette\bigp@rp\relax}%
  \displaylimits
}

\newcommand{\bigp@rp}[2]{%
  \vcenter{
    \m@th\hbox{\scalebox{\ifx#1\displaystyle2.1\else1.5\fi}{$#1\perp$}}
  }%
}
\makeatother

\newcommand{\qf}[1]{\mbox{$\langle #1\rangle $}}

\renewcommand{\geq}{\geqslant}
\renewcommand{\leq}{\leqslant}

\makeatletter
\def\ps@pprintTitle{%
 \let\@oddhead\@empty
 \let\@evenhead\@empty
 \def\@oddfoot{\centerline{\thepage}}%
 \let\@evenfoot\@oddfoot}
\makeatother

\newif\iffurther
\furtherfalse


\begin{document}
\begin{frontmatter}

\title{Triple Linkage of Quadratic Pfister Forms}

\author{Adam Chapman}
\ead{adam1chapman@yahoo.com}
\address{Department of Computer Science, Tel-Hai Academic College, Upper Galilee, 12208 Israel}
\author{Andrew Dolphin}
\ead{Andrew.Dolphin@uantwerpen.be}
\address{Department of Mathematics, Ghent University,  Ghent, Belgium}
\author{David B. Leep}
\ead{leep@uky.edu}
\address{Department of Mathematics, University of Kentucky, Lexington, KY 40506}

\begin{abstract}
Given a field $F$ of characteristic 2, we prove that if every three quadratic $n$-fold Pfister forms have a common quadratic $(n-1)$-fold Pfister factor then $I_q^{n+1} F=0$.
As a result, we obtain that if every three quaternion algebras over $F$ share a common maximal subfield then $u(F)$ is either $0,2$ or $4$.
We also prove that if $F$ is a nonreal field with $\operatorname{char}(F) \neq 2$ and 
$u(F)=4$,  then every three quaternion algebras share a common maximal subfield.
\end{abstract}

\begin{keyword}
Quadratic Forms, Quaternion Algebras, Linked Fields, $u$-Invariant
\MSC[2010] 11E81 (primary); 11E04, 16K20, 11R52 (secondary)
\end{keyword}
\end{frontmatter}

\section{Introduction}

We say that a set of quadratic $n$-fold Pfister forms is linked if there exists a quadratic or bilinear $(n-1)$-fold Pfister form which is a common factor to all the forms in this set.
By the natural identification of quaternion algebras with their norm forms which are quadratic 2-fold Pfister forms, a set of quaternion algebras is linked if they share a common maximal subfield.

A maximal subfield $K$ of a quaternion algebra over $F$ is a quadratic field extension of $F$. When $\operatorname{char}(F)=2$, $K/F$ can be either separable or inseparable, and one can refine the definition of linkage accordingly: a set of quaternion algebras is separably  (inseparably) linked if they share a common separable (inseparable) quadratic field extension of $F$. 
It was observed in \citep{Draxl:1975} that inseparable linkage for pairs of quaternion algebras implies separable linkage, and a counter example for the converse was provided in \cite{Lam:2002}. This observation was extended to pairs of Hurwitz algebras in \cite{ElduqueVilla:2005} and pairs of cyclic $p$-algebras of any prime degree in \cite{Chapman:2015}.

We extend the notion of separable and inseparable linkage of quaternion algebras to arbitrary $n$-fold Pfister forms in the following way:
a set of quadratic $n$-fold Pfister forms are separably (inseparably) linked if there exists a quadratic (bilinear) $(n-1)$-fold Pfister form which is a common factor to all the forms in the set.
It can be easily concluded from the fact mentioned above (as can be seen in \cite{Faivre:thesis}) that if two quadratic $n$-fold Pfister forms $\varphi_1$ and $\varphi_2$ satisfy $\varphi_1=B \otimes \pi_1$ and $\varphi_2=B \otimes \pi_2$ for some bilinear $(n-1)$-fold Pfister form $B$ and quadratic 1-fold Pfister forms $\pi_1$ and $\pi_2$, then $\varphi_1$ and $\varphi_2$ are separably linked.

It was proven in \cite[Main Theorem]{ElmanLam:1973} that if $F$ is a field of $\operatorname{char}(F) \neq 2$ and every two quaternion algebras over $F$ are linked then the possible values $u(F)$ can take are $0,1,2,4$ and 8, where $u(F)$ is defined to be the supremum of the  dimensions of  nonsingular anisotropic quadratic forms over $F$ of a finite order in $W_q F$. 
For fields $F$ of characteristic $2$, it was shown in
 \cite[Theorem 3.1]{Baeza:1982} that every two quaternion algebras over $F$ share a quadratic inseparable field extension of $F$ if and only if $u(F) \leq 4$, and in 
 \cite[Corollary 5.2]{ChapmanDolphin} that if every two quaternion algebras over $F$ share a maximal subfield then $u(F)$ is either $0,2,4$ or $8$.

In \cite{becher}, the case of linkage of three bilinear $n$-fold Pfister forms in any characteristic was studied.
It was shown in \cite[Theorem 5.1]{becher} that if $F$ is a nonreal field and every three bilinear $n$-fold Pfister forms are linked then $I^{n+1} F=0$.
When $\operatorname{char}(F) \neq 2$, by the natural identification of quadratic forms with their underlying symmetric bilinear forms, this implies that if every three $n$-fold Pfister forms are linked then $I_q^{n+1} F=0$.
This was used to study the Hasse number of $F$. The Hasse number, denoted $\tilde{u}(F)$, is the supremum of the dimensions of anisotropic totally indefinite quadratic forms over $F$. Note that if $F$ is nonreal, then $\tilde{u}(F)=u(F)$. 
It was shown in \cite[Corollary 5.8]{becher} that if $F$ is a field with $\operatorname{char}(F) \neq 2$ and every three quaternion algebras over $F$ are linked then $\tilde{u}(F) \leq 4$.
The question of whether the converse held, that is whether $\tilde{u}(F) \leq 4$ implies 
that all triples of quaternion algebras over $F$ are linked, appeared in a preliminary version of 
 of \cite{becher}.

%


In this paper we prove that if $\operatorname{char}(F)=2$ and every three quadratic $n$-fold Pfister forms in $F$ are linked then $I_q^{n+1} F=0$.
We conclude that if every three quaternion algebras over $F$ are linked then $u(F) \leq 4$. In the last section we show that if  $F$ is a nonreal field with $\operatorname{char}(F) \neq 2$ and ${u}(F)=4$ then every three quaternion algebras over $F$ are linked.
Note that   we shared this last result with the author of \cite{becher} who included a similar proof in the final version of that paper, acknowledging our contribution.


\section{Bilinear and Quadratic Pfister Forms}

Let $F$ be a field with $\operatorname{char}(F)=2$. We recall what we need from the algebraic theory of quadratic forms. For general reference see \cite[Chapters 1 and 2]{EKM}.

Let $V$ be an $n$-dimensional $F$-vector space. A symmetric bilinear form on $V$ is a map $B : V \times V \rightarrow F$ satisfying $B(v,w)=B(w,v)$, $B(cv,w)=c B(v,w)$ and $B(v+w,t)=B(v,t)+B(w,t)$ for all $v,w,t \in V$ and $c \in F$.
A symmetric bilinear form $B$ is degenerate if there exists a vector $v \in V\setminus\{0\}$ such that $B(v,w)=0$ for all $w \in V$.
If such a vector does not exist, we say that $B$ is nondegenerate.
Two symmetric bilinear  forms $B: V\times V \rightarrow F$ and $B' : W \times W\rightarrow F$ are isometric if there exists an isomorphism $M : V \rightarrow W$ such that $B(v,v')=B'(Mv,Mv')$ for all $v,v' \in V$.

A quadratic form over $F$ is a map $\varphi : V \rightarrow F$ 
such that $\varphi(av)= a^2\varphi(v)$ for all $a\in F$ and $v\in V$ and the map defined by  $B_\varphi(v,w)=\varphi(v+w)-\varphi(v)-\varphi(w)$ for all $v,w\in V$ is a bilinear form on $V$. The bilinear form $B_\varphi$ is called the polar form of $\varphi$ and is clearly symmetric.  
  Two quadratic forms $\varphi : V \rightarrow F$ and $\psi : W \rightarrow F$ are isometric if there exists an isomorphism $M : V \rightarrow W$ such that $\varphi(v)=\psi(Mv)$ for all $v \in V$.
We are interested in the isometry classes of quadratic forms, so when we write $\varphi=\psi$ we actually mean that they are isometric.

%
%

We say that $\varphi$ is singular if $B_\varphi$ is degenerate, and that $\varphi$ is nonsingular if $B_\varphi$ is nondegenerate.
Every nonsingular form $\varphi$ is even dimensional and can be written as 
$$\varphi=[\alpha_1,\beta_1] \perp \dots \perp [\alpha_n,\beta_n]$$
for some $\alpha_1,\dots,\beta_n \in F$, where $[\alpha,\beta]$ denotes the two-dimensional quadratic form $\psi(x,y)=\alpha x^2+xy+\beta y^2$ and $\perp$ denotes  the orthogonal sum of quadratic forms.

We say that a quadratic form $\varphi : V \rightarrow F$ is isotropic if there exists a vector $v \in V\setminus\{0\}$ such that $\varphi(v)=0$.
If no such vector exists, we say that $\varphi$ is anisotropic.
The unique nonsingular two-dimensional isotropic quadratic form is $\varmathbb{H}=[0,0]$, called ``the hyperbolic plane".
A hyperbolic form is an orthogonal sum of hyperbolic planes.
We say that two nonsingular quadratic forms are Witt equivalent if their orthogonal sum is a hyperbolic form.

We denote by $\langle \alpha_1,\dots,\alpha_n \rangle$ the diagonal bilinear form given by  
$(x,y)\mapsto \sum_{i=1}^n \alpha_ix_iy_i$. 
Given two symmetric bilinear forms $B_1:V\times V\rightarrow F$ and $B_2:W\times W\rightarrow F$,   the tensor product of $B_1$ and $B_2$ denoted $B_1\otimes B_2$  
 is the unique $F$-bilinear map $B_1\otimes B_2:(V\otimes_F W)\times (V\otimes_F W)\rightarrow F$ such that 
$(B_1\otimes B_2)\left( (v_1\otimes w_1), (v_2\otimes w_2)\right) =B_1(v_1,v_2)\cdot B_2(w_1,w_2) $
for all $w_1,w_2\in W, v_1,v_2\in V$.

A bilinear $n$-fold Pfister form over $F$ is a symmetric bilinear form isometric to a bilinear form
$$\langle 1,\alpha_1\rangle \otimes \langle 1,\alpha_2\rangle \otimes \dots \otimes \langle 1,\alpha_n\rangle$$
for some $\alpha_1,\alpha_2,\dots,\alpha_n \in F^\times$.
We denote such a form by $\langle \langle \alpha_1,\alpha_2,\dots,\alpha_n \rangle \rangle$. By convention, the bilinear 0-fold Pfister form is $\langle 1 \rangle$.


  Let $B:V\times V\rightarrow F$ be a symmetric bilinear form over $F$ and $\varphi:W\rightarrow F$ be a quadratic form over $F$. We may define a  quadratic form $B\otimes \varphi:V\otimes_F W\rightarrow F$ determined by the rule that 
$( B\otimes \varphi) (v\otimes w)=  B(v,v) \cdot \varphi(w)$
for all $w\in W, v\in V$. We call this quadratic form  the tensor product of $B$ and $\varphi$. 
A quadratic $n$-fold Pfister form over $F$ is a tensor product of a bilinear $(n-1)$-fold Pfister form  $\langle \langle \alpha_1,\alpha_2,\dots,\alpha_{n-1} \rangle \rangle$ and a two-dimensional quadratic form  $[1,\beta]$ for some $\beta \in F$.
We denote such a form by $\langle \langle \alpha_1,\dots,\alpha_{n-1},\beta]]$. Quadratic $n$-fold Pfister forms are isotropic if and only if they are hyperbolic (see  \cite[(9.10)]{EKM}).

The set of Witt equivalence classes of nonsingular quadratic forms is an abelian group with $\perp$ as the binary group operation and the class of $\varmathbb{H}$ as the zero element. We denote this group by $I_q F$ or $I_q^1 F$.
This group is generated by scalar multiples of quadratic 1-fold Pfister forms.
Let $I_q^n F$ denote the subgroup generated by scalar multiples of quadratic $n$-fold Pfister forms.

A quaternion algebra over $F$ is an $F$-algebra of the form $[\alpha,\beta)_{2,F}=F \langle x,y: x^2+x=\alpha, y^2=\beta, y x y^{-1}=x+1 \rangle$ for some $\alpha \in F$ and $\beta \in F^\times$. Its norm form is the quadratic 2-fold Pfister form $\langle \langle \beta,\alpha]]$. This matching provides a 1-to-1 correspondence between quaternion $F$-algebras and quadratic 2-fold Pfister forms over $F$.
Therefore, the notions of separable and inseparable linkage translate naturally from quaternion algebras to $2$-fold Pfister forms.
We extend these notions of linkage to quadratic $n$-fold Pfister forms for any $n \geq 2$ in the following way: a set $S=\{ \varphi_1,\dots,\varphi_m\}$ of $m$ quadratic $n$-fold Pfister forms is separably linked if there exist a quadratic $(n-1)$-fold Pfister form $\phi$ and bilinear 1-fold Pfister forms $B_1,\dots,B_m$ such that $\varphi_i=B_i \otimes \phi$ for all $i \in \{1,\dots,m\}$. The set $S$ is inseparably linked if there exists a bilinear $(n-1)$-fold Pfister form $B$ and quadratic 1-fold Pfister forms $\phi_1,\dots,\phi_m$ such that $\varphi_i=B \otimes \phi_i$ for all $i \in \{1,\dots,m\}$.

We make use of the following well-known results on quaternion algebras, and, in the case of $(1)$,  its reinterpretation  in terms of $2$-fold Pfister forms. 

\begin{lem}[{\cite[VII.1.9]{BO}}]\label{iso} 
For $\alpha\in F$ and $\beta,\gamma\in F^\times$ we have 
\begin{enumerate} 
\item[$(1)$] $[\alpha,\beta)_{2,F}\cong [\alpha^2 +\beta , \beta)_{2,F}$. Further if $\alpha\neq 0$,  then $[\alpha,\beta)_{2,F}\cong [\alpha, \beta\alpha)_{2,F}$.
\item[$(2)$]  $[\alpha,\beta)_{2,F}\otimes_F [\alpha,\gamma)_{2,F}$ is Brauer equivalent to $[\alpha,\beta\gamma)_{2,F}$.
\end{enumerate}
\end{lem}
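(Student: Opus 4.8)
Since this is a standard fact, quoted from \cite{BO}, I only indicate how one proves it. The plan is to establish $(1)$ by writing down explicit new generators, and to deduce $(2)$ from the description of the quaternion symbol as a cyclic algebra.

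Fix $A=[\alpha,\beta)_{2,F}=F\langle x,y\rangle$ with $x^2+x=\alpha$, $y^2=\beta$ and $yxy^{-1}=x+1$, so that $yx=xy+y$. For the first isomorphism in $(1)$, put $x'=x+y+\alpha$ and $y'=y$. A direct computation in characteristic $2$ gives $(x')^2+x'=\alpha^2+\beta$, $(y')^2=\beta$ and $y'x'(y')^{-1}=x'+1$, and since $x=x'+y'+\alpha$ the elements $x',y'$ generate $A$. Hence the defining presentation of $[\alpha^2+\beta,\beta)_{2,F}$ yields a surjective $F$-algebra homomorphism onto $A$; as both algebras are central simple of dimension $4$ and a nonzero homomorphism out of a simple ring is injective, this map is an isomorphism. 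For the second isomorphism in $(1)$, assume $\alpha\neq 0$ and put $x'=x$, $y'=xy$. Then $(x')^2+x'=\alpha$, $(y')^2=(xy)^2=(x^2+x)\beta=\alpha\beta$, and $y'x'=(x'+1)y'$ since both sides equal $\alpha y$; because $x(x+1)=\alpha\neq 0$ the element $x$ is a unit, so $y=x^{-1}(xy)\in F\langle x',y'\rangle$ and again $x',y'$ generate $A$. The same central-simple argument gives $[\alpha,\beta\alpha)_{2,F}\cong A$.

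For $(2)$, first note that if $\alpha=a^2+a$ with $a\in F$ then $x+a$ is a nontrivial idempotent of $[\alpha,\beta)_{2,F}$, so all three algebras are isomorphic to $\mathrm{M}_2(F)$ and there is nothing to prove. Otherwise $K:=F[x]$ with $x^2+x=\alpha$ is a separable quadratic field extension, $\Gal(K/F)=\langle\sigma\rangle$ with $\sigma(x)=x+1$, and the defining relations identify $[\alpha,\beta)_{2,F}$ with the cyclic algebra $(K/F,\sigma,\beta)=K\oplus Ky$, and likewise for $[\alpha,\gamma)_{2,F}$ and $[\alpha,\beta\gamma)_{2,F}$. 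The claim is then the classical fact that $b\mapsto(K/F,\sigma,b)$ defines a homomorphism $F^\times\to\Br(F)$ with kernel $N_{K/F}(K^\times)$, which gives $(K/F,\sigma,\beta)\otimes_F(K/F,\sigma,\gamma)\sim(K/F,\sigma,\beta\gamma)$. If a self-contained argument is wanted, one observes instead that $e=x\otimes 1+1\otimes x$ is a nontrivial idempotent of $A=[\alpha,\beta)_{2,F}\otimes_F[\alpha,\gamma)_{2,F}$, so that $A\sim eAe$, and checks that $x\otimes 1$ and $y\otimes y$ commute with $e$ and restrict to $eAe$ as a pair of standard generators exhibiting $eAe\cong[\alpha,\beta\gamma)_{2,F}$.

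The verifications in $(1)$ are routine once the substitutions $x'=x+y+\alpha$ and $(x',y')=(x,xy)$ have been written down, so the only ingenuity needed there is to guess these generators; I anticipate no real obstacle. In $(2)$ the substantive input is the multiplicativity of the cyclic symbol in its second slot, which is standard and is the reason the lemma is cited; the main obstacle, should one insist on the self-contained route, is the bookkeeping needed to confirm that $x\otimes 1$ and $y\otimes y$ generate the four-dimensional algebra $eAe$ and that the relations they satisfy there are precisely those of $[\alpha,\beta\gamma)_{2,F}$.
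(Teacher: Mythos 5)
Your sketch is correct. Note, however, that the paper offers no proof of this lemma at all: it is stated as a quotation of \cite[VII.1.9]{BO}, so there is no internal argument to compare yours against. Your verifications check out: for the first isomorphism in $(1)$, with $x'=x+y+\alpha$ one indeed gets $(x')^2+x'=\alpha^2+\beta$ using $xy+yx=y$ in characteristic $2$; for the second, $(xy)^2=(x^2+x)\beta=\alpha\beta$ and $xyx=(x+1)xy$ are exactly the needed relations, and invertibility of $x$ follows from $x(x+1)=\alpha\neq 0$. For $(2)$, both routes you describe are standard and sound: the split case $\alpha=a^2+a$ is correctly disposed of via the idempotent $x+a$, and in the nonsplit case the identification with the cyclic algebra $(K/F,\sigma,\beta)$ reduces the claim to multiplicativity of the cyclic symbol in its second slot; the self-contained variant with the idempotent $e=x\otimes 1+1\otimes x$ (idempotent precisely because $\operatorname{char}F=2$) and the commuting elements $x\otimes 1$, $y\otimes y$ is the classical Albert-style argument. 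Nothing is missing beyond the routine bookkeeping you already flag.
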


%

\section{Triple Linkage for Quadratic Pfister Forms}\label{SectionTriple}

In this section we study properties of fields $F$ with $\operatorname{char}(F)=2$ in which every three quadratic $n$-fold Pfister forms are linked for some given integer $n \geq 3$. The case $n=2$ is somewhat different and will  be handled  in the next section.
We make use of the following invariant defined in \cite[Definition 4.1]{ChapmanGilatVishne}:
\begin{defn}\label{Invariant}
Let $n$ be an integer $\geq 2$. Consider a set $S$ of quadratic $n$-fold Pfister forms.
We say $S$ is \textbf{tight} if every element in the subgroup $G$ of $I_q^n F/I_q^{n+1} F$ generated by the Witt classes of the forms in $S$ is represented by a quadratic  $n$-fold  Pfister form.
For such a finite tight set, we define $\Sigma_S$ to be the Witt Class of the orthogonal sum of the quadratic $n$-fold Pfister representatives of the elements in $G$.
By the identification of quaternion algebras with their norm forms, the notion of tightness and the associated invariant also apply for sets of quaternion algebras in the Brauer group.
\end{defn}

A similar invariant was studied in \cite{Sivatski:2014} when $\operatorname{char}(F) \neq 2$ and $|S|=3$.

\begin{lem}\label{lem:tight}
Let $n$ and $k$ be  integers $\geq 2$ and let $S=\{\varphi_1,\dots,\varphi_k\}$ be a tight set of $k$ quadratic $n$-fold Pfister forms over a field $F$  with $\operatorname{char}(F)=2$.
\begin{enumerate}
\item If $S$ is separably linked then $\Sigma_S$ is the Witt class of a quadratic $(n+k-1)$-fold Pfister form. More precisely, if $\varphi_i=\langle \langle a_i \rangle \rangle \otimes \phi$ for each $i \in \{1,\dots,k\}$, then $\Sigma_S=\langle \langle a_1,\dots,a_k \rangle \rangle \otimes \phi$.
\item If $S$ is inseparably linked then $\Sigma_S$ is the trivial Witt class.
\end{enumerate}
\end{lem}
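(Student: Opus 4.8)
The plan is to reduce both statements to a single computation performed modulo $I_q^{n+1}F$ — the behaviour of orthogonal sums of quadratic $n$-fold Pfister forms that share a common $(n-1)$-fold Pfister factor — and then to reassemble the data over the (at most $2^k$) elements of $G$, using the Arason--Pfister Hauptsatz to rigidify the choice of quadratic $n$-fold Pfister representatives.

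The first task is two congruences. For a quadratic $(n-1)$-fold Pfister form $\phi$ and $c,d\in F^\times$, one has $\langle\langle c\rangle\rangle\otimes\phi\perp\langle\langle d\rangle\rangle\otimes\phi\equiv\langle\langle cd\rangle\rangle\otimes\phi\pmod{I_q^{n+1}F}$: the orthogonal sum of these three quadratic forms is $(\langle\langle c\rangle\rangle\perp\langle\langle d\rangle\rangle\perp\langle\langle cd\rangle\rangle)\otimes\phi$, its bilinear factor is Witt equivalent (using that $\langle 1,1\rangle$ is metabolic in characteristic $2$) to the bilinear $2$-fold Pfister form $\langle\langle c,d\rangle\rangle\in I^2F$, and $I^2F\cdot I_q^{n-1}F\subseteq I_q^{n+1}F$. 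Likewise, for a bilinear $(n-1)$-fold Pfister form $B$ and $\beta,\gamma\in F$ one has $B\otimes[1,\beta]\perp B\otimes[1,\gamma]\equiv B\otimes[1,\beta+\gamma]\pmod{I_q^{n+1}F}$: the orthogonal sum of these three forms is $B\otimes([1,\beta]\perp[1,\gamma]\perp[1,\beta+\gamma])$, the quadratic factor has vanishing Arf invariant so it lies in $I_q^2F$, and $[B]\in I^{n-1}F$. Now the Hauptsatz tells us a quadratic $n$-fold Pfister form whose Witt class lies in $I_q^{n+1}F$ is hyperbolic, so two quadratic $n$-fold Pfister forms congruent modulo $I_q^{n+1}F$ are isometric. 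Iterating the first congruence (resp. the second), the element $\sum_{i\in T}[\varphi_i]$ of $G$, for $T\subseteq\{1,\dots,k\}$, is represented by $\langle\langle\prod_{i\in T}a_i\rangle\rangle\otimes\phi$ (resp. by $B\otimes[1,\sum_{i\in T}\beta_i]$); thus all the Pfister representatives entering $\Sigma_S$ may be taken in this standard form, depending only on the classes $[\varphi_i]$.

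For (1), expand the bilinear tensor product: $\langle\langle a_1,\dots,a_k\rangle\rangle\otimes\phi=\bigperp_{T\subseteq\{1,\dots,k\}}\langle\prod_{i\in T}a_i\rangle\otimes\phi$. Since $\langle\langle c\rangle\rangle\otimes\phi=\phi\perp\langle c\rangle\otimes\phi$ and $2[\phi]=0$ in $W_qF$, the $2^k$ copies of $\phi$ cancel in pairs, so the Witt class of $\langle\langle a_1,\dots,a_k\rangle\rangle\otimes\phi$ equals $\sum_{T}[\langle\langle\prod_{i\in T}a_i\rangle\rangle\otimes\phi]=\sum_{T}[\pi_{g_T}]$ where $g_T=\sum_{i\in T}[\varphi_i]\in G$. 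When the $[\varphi_i]$ are $\mathbb F_2$-linearly independent in $I_q^nF/I_q^{n+1}F$, the map $T\mapsto g_T$ is a bijection $\mathbb F_2^{\,k}\to G$, so the last sum is exactly $\Sigma_S$; this gives $\Sigma_S=[\langle\langle a_1,\dots,a_k\rangle\rangle\otimes\phi]$, visibly the Witt class of a quadratic $(n+k-1)$-fold Pfister form. For (2) one argues the same way with the second congruence: choosing a basis $[\varphi_1],\dots,[\varphi_r]$ of $G$ from among the $[\varphi_i]$ forces the corresponding $\beta_1,\dots,\beta_r$ to be $\mathbb F_2$-independent in $(F,+)$, so every $g\in G$ has Pfister representative $B\otimes[1,\textstyle\sum_{i\in J_g}\beta_i]$ and $\Sigma_S=[B]\cdot[\,\bigperp_{v\in V}[1,v]\,]$, where $V$ is the $\mathbb F_2$-span of $\beta_1,\dots,\beta_r$ in $F$; grouping the $v\in V$ by their classes modulo $\wp(F)=\{x^2+x:x\in F\}$, so that within each block the forms $[1,v]$ become isometric, one reduces $\bigperp_{v\in V}[1,v]$ (modulo hyperbolic summands) to a form lying in $I_q^2F$, which $[B]\in I^{n-1}F$ then annihilates, giving $\Sigma_S=0$.

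The step I expect to be the real obstacle is the reassembly over $G$: the Pfister-form identities and the filtration multiplicativity $I^jF\cdot I_q^\ell F\subseteq I_q^{j+\ell}F$ are routine, whereas passing faithfully from the $2^k$ formal subset sums to the actual group $G$ — controlling the linear dependencies among the $[\varphi_i]$ in $I_q^nF/I_q^{n+1}F$ and verifying that $\Sigma_S$, defined through Pfister representatives, really is computed by these orthogonal sums — together with the Hauptsatz-based rigidity of those representatives, is where the argument must be carried out with care.
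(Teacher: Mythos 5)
Your overall strategy --- prove that the common-factor decompositions add up (modulo $I_q^{n+1}F$), use the Hauptsatz to rigidify the Pfister representatives, then sum over $G$ --- is the natural one; note that the paper itself offers no argument for this lemma beyond a citation to [CGV], so you are supplying a proof the paper outsources. Part (1) is essentially right, with two caveats. First, the inference ``an $n$-fold Pfister form in $I_q^{n+1}F$ is hyperbolic, \emph{so} congruent $n$-fold Pfister forms are isometric'' skips the needed intermediate step: one applies the Hauptsatz to $\pi_1\perp\pi_2$, which is isotropic because both forms represent $1$ and $1+1=0$, hence has anisotropic part of dimension $<2^{n+1}$ and is therefore hyperbolic. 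Second, you only treat the case where $[\varphi_1],\dots,[\varphi_k]$ are independent in $I_q^nF/I_q^{n+1}F$. This is forgivable --- with $\Sigma_S$ read literally as a sum over the elements of $G$, the ``more precisely'' formula actually fails in degenerate cases (e.g.\ $k=2$ with $\varphi_1$ hyperbolic gives $\Sigma_S=[\varphi_2]$ while $\langle\langle a_1,a_2\rangle\rangle\otimes\phi$ is hyperbolic), so some independence or ``sum over all $2^k$ subset-sums'' convention is implicit --- but you should state explicitly which reading you are proving.

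The genuine gap is the final step of part (2). Having reduced to $\Sigma_S=[B\otimes\rho]$ with $\rho=\perp_{v\in V}[1,v]$, you argue that $\rho$ has trivial Arf invariant, hence lies in $I_q^2F$, ``which $[B]\in I^{n-1}F$ then annihilates.'' It does not: $I^{n-1}F\cdot I_q^2F\subseteq I_q^{n+1}F$, so this shows only $\Sigma_S\in I_q^{n+1}F$, and since $\dim(B\otimes\rho)=2^{n+r}\geq 2^{n+1}$ the Hauptsatz cannot upgrade that to $\Sigma_S=0$. The repair is that your additivity in the inseparable case holds on the nose, not merely modulo $I_q^{n+1}F$: the six-dimensional form $[1,\beta]\perp[1,\gamma]\perp[1,\beta+\gamma]$ contains the three-dimensional totally isotropic subspace $\{y_1=y_2=y_3,\ x_1+x_2+x_3=0\}$ and is therefore hyperbolic, so $[1,\beta]+[1,\gamma]=[1,\beta+\gamma]$ already in $W_qF$, whence $[B\otimes[1,\beta]]+[B\otimes[1,\gamma]]=[B\otimes[1,\beta+\gamma]]$ exactly. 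Telescoping over $V$ gives $[\rho]=[1,\textstyle\sum_{v\in V}v]=[1,0]=0$ (valid since $|V|=2^k\geq 4$ in the independent case), so $\rho$ is hyperbolic and $\Sigma_S=0$ without any appeal to the filtration. As written, your argument proves only that $\Sigma_S$ is trivial modulo $I_q^{n+1}F$, which would not suffice for the applications in Sections 3 and 4.
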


\begin{proof}
The first statement is a special case of \cite[Proposition 4.3]{ChapmanGilatVishne}. The second statement is an immediate result of \cite[Corollary 3.6 and Proposition 4.2 (2)]{ChapmanGilatVishne}
\end{proof}

\begin{thm}
Let $F$ be a field with $\operatorname{char}(F)=2$.
Then for any integer $n \geq 3$, if every set of three quadratic $n$-fold Pfister forms over $F$ is  either separably or inseparably linked then $I_q^{n+1} F=0$.
\end{thm}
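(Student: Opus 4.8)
The plan is to show that if $I_q^{n+1}F \neq 0$, then one can construct three quadratic $n$-fold Pfister forms that are simultaneously not separably linked and not inseparably linked. Since $I_q^{n+1}F \neq 0$, there is an anisotropic quadratic $(n+1)$-fold Pfister form $\rho$; writing $\rho = \langle\langle b \rangle\rangle \otimes \tau$ for an $n$-fold Pfister form $\tau$ and a suitable scalar, one already has two $n$-fold Pfister subforms of $\rho$ that are inseparably linked through the common bilinear $(n-1)$-fold factor but whose sum is nontrivial. The idea is to take $\varphi_1$ and $\varphi_2$ to be two inseparably linked (but not Witt-trivial-summing) $n$-fold Pfister forms coming from such a $\rho$, and then to choose $\varphi_3$ cleverly so that no common quadratic $(n-1)$-fold Pfister factor can exist for all three and no common bilinear $(n-1)$-fold Pfister factor can exist either.

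The key tool is \Lref{lem:tight}: if $\{\varphi_1,\varphi_2,\varphi_3\}$ is a tight set, then separable linkage forces $\Sigma_S$ to be represented by a quadratic $(n+2)$-fold Pfister form (in particular anisotropic or hyperbolic), while inseparable linkage forces $\Sigma_S = 0$. So the strategy is: first arrange the three forms so that the set $S = \{\varphi_1,\varphi_2,\varphi_3\}$ is tight, then compute $\Sigma_S$ and show it is neither zero nor the class of an $(n+2)$-fold Pfister form — this simultaneously rules out both types of linkage and contradicts the hypothesis. Concretely, starting from anisotropic $\rho \in I_q^{n+1}F$, I would pick $\varphi_1$ and $\varphi_2$ to be $n$-fold Pfister forms with $\varphi_1 + \varphi_2 = \rho$ in $I_q^n F / I_q^{n+1}F$ (so that the two-element subset is already interesting), and then choose $\varphi_3$ so that the subgroup $G$ of $I_q^n F/I_q^{n+1}F$ generated by the three classes is $(\Z/2)^2$ or $(\Z/2)^3$, with every nonzero element represented by an $n$-fold Pfister form (tightness), while $\Sigma_S$ — the sum of those Pfister representatives — is a form whose anisotropic part has dimension incompatible with being a single $(n+2)$-fold Pfister class and is nonzero.

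The main obstacle I anticipate is the construction of $\varphi_3$ and the verification of tightness: one needs enough room in the field to produce a third Pfister form in "general position" relative to $\varphi_1,\varphi_2$, and one must check that all the relevant sums in $I_q^n F/I_q^{n+1}F$ are again represented by $n$-fold Pfister forms rather than by longer combinations. A natural way around this is to pass to a generic/transcendental extension (e.g., adjoin indeterminates) to guarantee $I_q^{n+1}$ of the extension is still nonzero and that a suitably generic triple exists, using standard specialization arguments to descend; alternatively one may exploit a dimension count on $\Sigma_S$ via the Arason–Pfister-type bound for $I_q^{n+2}$ in characteristic $2$ to force the contradiction once tightness is in hand. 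The residual bookkeeping — expressing $\Sigma_S$ explicitly via the formula in \Lref{lem:tight}(1) under the assumption of separable linkage, and separately via \Lref{lem:tight}(2) under inseparable linkage, and checking that neither is consistent with the nonvanishing of $\rho$ — is then routine Pfister-form manipulation using \Tref{iso} for the $2$-fold identities where needed.
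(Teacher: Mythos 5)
You have the right skeleton — use the tightness invariant $\Sigma_S$, apply \cref{lem:tight} to see that separable linkage forces $\Sigma_S$ into $I_q^{n+2}F$ while inseparable linkage forces $\Sigma_S=0$, and invoke the Hauptsatz to get a contradiction with an anisotropic $(n+1)$-fold class — but the proof is missing its central constructive step. Everything hinges on actually exhibiting a tight triple $S$ with $\Sigma_S$ Witt equivalent to a prescribed $(n+1)$-fold Pfister form, and this is exactly the part you leave as ``choose $\varphi_3$ cleverly'' / ``in general position''. The paper's construction is completely explicit and exploits $n\geq 3$: write $\Phi=\langle\langle a,b,c\rangle\rangle\otimes\psi$ with $\psi$ a quadratic $(n-2)$-fold Pfister form, and take $\varphi_1=\langle\langle a,b\rangle\rangle\otimes\psi$, $\varphi_2=\langle\langle a,c\rangle\rangle\otimes\psi$, $\varphi_3=\langle\langle b,c\rangle\rangle\otimes\psi$. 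Tightness is then a direct computation: each pairwise sum and the triple sum are again $n$-fold Pfister classes modulo $I_q^{n+1}F$ precisely because any two of the three forms share a common slot. This is the opposite of ``general position'': a generic third form would almost never give a tight set, since tightness requires every element of the generated subgroup of $I_q^nF/I_q^{n+1}F$ to be a Pfister class, which is a highly special condition.

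Your proposed workarounds do not repair this. Passing to a transcendental extension $F(t)$ destroys the hypothesis — ``every triple of $n$-fold Pfister forms over $F$ is linked'' says nothing about triples over $F(t)$, so there is nothing to specialize back from. And a ``dimension count on $\Sigma_S$'' cannot even begin until you know $S$ is tight, because $\Sigma_S$ is only defined for tight sets. Note also that the paper's argument is direct rather than by contradiction: it shows every quadratic $(n+1)$-fold Pfister form $\Phi$ is hyperbolic (if $S$ is separably linked, $\Sigma_S\simeq\Phi$ lies in $I_q^{n+2}F$ and has dimension $2^{n+1}<2^{n+2}$, so the Hauptsatz forces $\Phi$ hyperbolic; if inseparably linked, $\Sigma_S$ is already hyperbolic), whence $I_q^{n+1}F=0$ since that group is generated by scalar multiples of such forms. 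To complete your proof you must supply the explicit decomposition of $\Phi$ and verify the tightness computation; without it the argument does not go through.
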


\begin{proof}
Let $n$ be an integer $\geq 3$. Consider a quadratic $(n+1)$-fold Pfister form $\Phi$. Then $\Phi$ can be written as $\langle \langle a,b,c \rangle \rangle \otimes \psi$ where $\psi$ is some quadratic $(n-2)$-fold Pfister form and $a,b,c$ are some elements in $F^\times$.
Consider the forms $\varphi_1=\langle \langle a,b \rangle \rangle \otimes \psi$, $\varphi_2=\langle \langle a,c \rangle \rangle \otimes \psi$ and $\varphi_3=\langle \langle b,c \rangle \rangle \otimes \psi$.
Then in  $I_q^n F/I_q^{n+1} F$ we have 
$$\varphi_1\perp \varphi_2 = \qf{b,ab,c,ac}\otimes \psi=  \langle \langle a,bc \rangle \rangle \otimes \psi\mod I_q^{n+1} F\,.$$
Similarly, 
\begin{eqnarray*}\varphi_1\perp \varphi_3 = \langle \langle b,ac \rangle \rangle \otimes \psi\mod I_q^{n+1} F\,,
\\ \varphi_2\perp \varphi_3 = \langle \langle c,ab \rangle \rangle \otimes \psi\mod I_q^{n+1} F\,,
\end{eqnarray*}
and 
$$ \varphi_1\perp \varphi_2\perp \varphi_3 = \langle \langle ab,ac \rangle \rangle \otimes \psi\mod I_q^{n+1} F\,. $$ 
Therefore $S=\{\varphi_1,\varphi_2,\varphi_3\}$ is a tight triplet. 
 A straight-forward computation then gives that $\Sigma_S$ is Witt equivalent to $\Phi$.

Since $\varphi_1,\varphi_2,\varphi_3$ are quadratic $n$-fold Pfister forms, $S$ is separably or inseparably linked. If they are separably linked then $\Sigma_S$ is a quadratic $(n+2)$-fold Pfister form by \cref{lem:tight} $(1)$.
By the Hauptstatz theorem \cite[23.7]{EKM}, this can happen only when $\Phi$ is hyperbolic.
If $S$ is inseparably linked then $\Sigma_S$ is hyperbolic by \cref{lem:tight}, $(2)$ and so $\Phi$ is hyperbolic as well.
Consequently $I_q^{n+1} F=0$.
\end{proof}

\begin{cor}
For any integer $n \geq 3$, if every set of  three quadratic $n$-fold Pfister forms over $F$  is   separably linked then every set of  two quadratic $n$-fold Pfister forms over $F$ is  inseparably linked.
\end{cor}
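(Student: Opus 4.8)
The plan is to deduce $I_q^{n+1}F=0$ from the preceding theorem and then to show that, in the presence of this vanishing, the separable linkage forced by the hypothesis can always be improved to inseparable linkage for a pair.

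First I would apply the theorem: since every three quadratic $n$-fold Pfister forms over $F$ are separably linked, they are in particular separably or inseparably linked, so $I_q^{n+1}F=0$. Now take two quadratic $n$-fold Pfister forms $\varphi_1,\varphi_2$. If $\varphi_1\cong\varphi_2$, or if one of them is hyperbolic, inseparable linkage is immediate (in the second case, say $\varphi_1$ hyperbolic, write $\varphi_2=B\otimes[1,\mu]$ with $B$ a bilinear $(n-1)$-fold Pfister form and note $\varphi_1\cong B\otimes[1,0]$). Otherwise both are anisotropic, and adjoining the hyperbolic quadratic $n$-fold Pfister form as a third, distinct form and applying the hypothesis to that triple produces a common quadratic $(n-1)$-fold Pfister factor: $\varphi_1=\langle\langle a\rangle\rangle\otimes\phi$ and $\varphi_2=\langle\langle b\rangle\rangle\otimes\phi$ for some anisotropic quadratic $(n-1)$-fold Pfister form $\phi$ and $a,b\in F^\times$.

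The crux is the auxiliary form $\pi=\langle\langle ab\rangle\rangle\otimes\phi$, a quadratic $n$-fold Pfister form, together with a short computation in $W_qF$. Using that $\langle 1,1\rangle\otimes\phi$ is hyperbolic and that $\langle a,b\rangle\cong\langle a\rangle\otimes\langle 1,ab\rangle$, one gets $\varphi_1\perp\varphi_2\sim\langle a\rangle\otimes\pi$; since $\langle\langle a\rangle\rangle\otimes\pi$ is a quadratic $(n+1)$-fold Pfister form it lies in $I_q^{n+1}F=0$, and therefore $\varphi_1\perp\varphi_2\perp\pi\sim 0$. Consequently $(\varphi_1\perp\varphi_2)_{\mathrm{an}}$ is isometric to $\pi$ if $\pi$ is anisotropic, and is hyperbolic (whence $\varphi_1\cong\varphi_2$) if $\pi$ is isotropic; in both cases the anisotropic part of $\varphi_1\perp\varphi_2$ is a genuine quadratic $n$-fold Pfister form or is $0$. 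The final step is to read off inseparable linkage from this: I would invoke the characterization, in characteristic $2$, of inseparable linkage of two quadratic $n$-fold Pfister forms by the condition that $(\varphi_1\perp\varphi_2)_{\mathrm{an}}$ be again a quadratic $n$-fold Pfister form (or $0$) — the content being, via roundness, that a scaled quadratic Pfister form $\langle c\rangle\otimes\tau$ is isometric to a quadratic Pfister form exactly when $c\in D^\times(\tau)$, and that this forces the common $(n-1)$-fold factor to be chosen bilinear rather than quadratic. Decomposing $\pi=B\otimes[1,\mu]$, the form $B$ is then the sought common bilinear $(n-1)$-fold Pfister factor of $\varphi_1$ and $\varphi_2$, after adjusting the quadratic slots.

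I expect this last step to be the main obstacle: separable linkage of a pair does not by itself yield inseparable linkage — this is the Draxl/Lam phenomenon recalled in the introduction — and the gap is bridged only because $I_q^{n+1}F=0$ upgrades $\varphi_1\perp\varphi_2\sim\langle a\rangle\otimes\pi$ to $\varphi_1\perp\varphi_2\sim\pi$, i.e.\ forces $a$ to be a value of $\pi$, so that the anisotropic part of $\varphi_1\perp\varphi_2$ is an honest Pfister form and not merely a scaled one. A variant of the final step, avoiding the general characterization, would be to re-apply the triple hypothesis to $\{\varphi_1,\pi,\ \text{hyperbolic}\}$ and track the resulting common quadratic $(n-1)$-fold factor together with the relation $\varphi_1\perp\varphi_2\perp\pi\sim 0$ in order to extract the bilinear factor explicitly.
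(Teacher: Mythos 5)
Your overall architecture is the same as the paper's: apply the preceding theorem to get $I_q^{n+1}F=0$, observe that the hypothesis forces every \emph{pair} of quadratic $n$-fold Pfister forms to be separably linked, and then use the vanishing of $I_q^{n+1}F$ to upgrade separable linkage of a pair to inseparable linkage. The paper performs this last upgrade in one stroke by citing \cite[Corollary 5.4]{ChapmanGilatVishne}. Your intermediate computation is correct and is in fact equivalent to the vanishing of the linkage invariant used there: writing $\varphi_1=\langle\langle a\rangle\rangle\otimes\phi$, $\varphi_2=\langle\langle b\rangle\rangle\otimes\phi$ and $\pi=\langle\langle ab\rangle\rangle\otimes\phi$, one does get $\varphi_1\perp\varphi_2\sim\langle a\rangle\otimes\pi$ and $\langle a\rangle\otimes\pi\perp\pi=\langle\langle a\rangle\rangle\otimes\pi=\langle\langle a,b\rangle\rangle\otimes\phi\in I_q^{n+1}F=0$, hence $\varphi_1\perp\varphi_2\sim\pi$.

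The gap is in the final step, and you have correctly located it yourself. From $\varphi_1\perp\varphi_2\sim\pi$ with $\pi$ anisotropic you may conclude, by roundness, that $a\pi\cong\pi$, i.e.\ $a\in D(\pi)$ --- but that is all roundness gives you. It does not produce a bilinear factor $\langle\langle a\rangle\rangle$ of $\pi$, let alone a \emph{common} bilinear $(n-1)$-fold Pfister factor of $\varphi_1$ and $\varphi_2$; the phrase ``this forces the common $(n-1)$-fold factor to be chosen bilinear rather than quadratic'' is an assertion, not an argument. The implication ``$(\varphi_1\perp\varphi_2)_{\mathrm{an}}$ is a quadratic $n$-fold Pfister form (or $0$) $\Rightarrow$ $\varphi_1,\varphi_2$ inseparably linked'' is precisely the nontrivial direction of the theorem in \cite{ChapmanGilatVishne} (equivalently, of Faivre's characterization), whose proof requires a genuine subform/common-divisor argument for bilinear Pfister factors of quadratic Pfister forms, not a roundness observation. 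So either you cite that characterization as a known result --- in which case your proof collapses to the paper's, with extra but redundant computation --- or you must actually prove it, which you have not done. Everything before this point (the reduction to pairs via an auxiliary third form, the treatment of the hyperbolic and isometric cases, and the Witt-group manipulation) is fine.
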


\begin{proof}
By the previous theorem $I_q^{n+1}(F)=0$.
By the assumption, every set of  two quadratic $n$-fold Pfister forms over $F$ is separably  linked.
By \cite[Corollary 5.4]{ChapmanGilatVishne}, $I_q^{n+1}(F)=0$ implies that two quadratic $n$-fold Pfister forms over $F$ are separably linked if and only if they are inseparably linked.
Hence every set of  two quadratic  $n$-fold Pfister forms over $F$ are inseparably linked.
\end{proof}

\section{Triple Linkage for Quaternion Algebras}

We again fix $F$ to be a field with $\operatorname{char}(F)=2$.
In this section we complete the picture for the case of quadratic  2-fold Pfister forms over $F$. These forms are in in 1-1 correspondence with quaternion algebras and our proofs are mainly written in terms of these algebras.
The proof in this particular case is somewhat different from the case of higher-fold Pfister forms.

\begin{lem}\label{rightslotsquare} Let $\alpha,\beta,\lambda\in F$ with $\lambda^2\neq \beta\neq 0$.  Then  $[\alpha,\beta)_{2,F}=[\alpha+\lambda^2 \alpha \beta^{-1},\beta+\lambda^2)_{2,F}$.
\end{lem}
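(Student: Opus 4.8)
The plan is to prove the isomorphism directly from the presentation $[\alpha,\beta)_{2,F}=F\langle x,y : x^2+x=\alpha,\ y^2=\beta,\ yxy^{-1}=x+1\rangle$, by exhibiting inside this algebra a pair of elements satisfying the defining relations of $[\alpha',\beta')_{2,F}$, where $\alpha'=\alpha+\lambda^2\alpha\beta^{-1}$ and $\beta'=\beta+\lambda^2$. The hypothesis $\lambda^2\neq\beta$ guarantees $\beta'\in F^\times$, so $[\alpha',\beta')_{2,F}$ is a genuine quaternion algebra, and an $F$-algebra homomorphism out of it will automatically be injective (hence, by dimension count, an isomorphism) as soon as it is nonzero.

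First I would record the elementary identities that hold in $[\alpha,\beta)_{2,F}$, all obtained from the three defining relations together with $\operatorname{char}(F)=2$: namely $x^2=x+\alpha$, $yx=xy+y$, $xy^2=\beta x$, $xyx=\alpha y$, $yxy=\beta(x+1)$ and $(xy)^2=\alpha\beta$. Then I would set $Y:=y+\lambda$ and $X:=x+\lambda\beta^{-1}xy$. Since $\lambda$ is central, $Y^2=y^2+\lambda^2=\beta'$, so $Y$ is invertible with $Y^{-1}=\beta'^{-1}Y$. The two remaining checks are: (i) $YX+XY=Y$, which upon expansion and use of the identities above collapses (all $xy$- and $x$-terms cancelling in characteristic $2$, and $\lambda\beta^{-1}\cdot\beta=\lambda$) to $y+\lambda=Y$, hence $YXY^{-1}=X+1$; and (ii) $X^2=x+\alpha+\lambda\beta^{-1}xy+\lambda^2\alpha\beta^{-1}$, the $y$-terms cancelling, so that $X^2+X=\alpha+\lambda^2\alpha\beta^{-1}=\alpha'$. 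Consequently $x\mapsto X,\ y\mapsto Y$ defines an $F$-algebra homomorphism $[\alpha',\beta')_{2,F}\to[\alpha,\beta)_{2,F}$; it is nonzero, its source is simple, hence it is injective, and since both algebras have $F$-dimension $4$ it is an isomorphism.

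The only genuinely non-mechanical step is finding the correcting term $\lambda\beta^{-1}xy$ in $X$: one looks for $X$ of the form $x+z$ so that conjugation by $Y=y+\lambda$ still sends $X$ to $X+1$, which forces $yz+zy=\lambda$; writing $z$ in the basis $\{1,x,y,xy\}$ and computing $yz+zy$, this equation pins down the $xy$-coordinate of $z$ to be $\lambda\beta^{-1}$, while the $1$- and $y$-coordinates are free and may be taken to be $0$. After that the argument is a routine verification, and in particular no distinction between $\alpha=0$ and $\alpha\neq0$ is needed, since the proof uses only that $[\alpha',\beta')_{2,F}$ is a simple $F$-algebra of dimension $4$.
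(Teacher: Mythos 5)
Your proof is correct and is essentially the paper's own argument: the paper takes $w=y+\lambda$ and $z=x+\lambda xy^{-1}$, which (since $y^{-1}=\beta^{-1}y$) is exactly your $X=x+\lambda\beta^{-1}xy$, and verifies the same relations $w^2=\beta+\lambda^2$, $wzw^{-1}=z+1$, $z^2+z=\alpha+\lambda^2\alpha\beta^{-1}$. You merely carry out the computations in more detail and make explicit the standard homomorphism-plus-simplicity-plus-dimension-count justification that the paper leaves implicit.
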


\begin{proof}
Write $[\alpha,\beta)_{2,F}=F \langle x,y : x^2+x=\alpha, y^2=\beta, y x y^{-1}=x+1 \rangle$.
Let $w=y+\lambda$ and $z=x+\lambda x y^{-1}$. Note that $w^2=\lambda^2+\beta \in F^\times$, and in particular $w$ is invertible.
We have that $wz-zw=y+\lambda=w$, i.e. $wzw^{-1}=z+1\,.$
Therefore $$[\alpha,\beta)_{2,F}=[z^2+z,w^2)_{2,F}=[\alpha+\lambda^2 \alpha \beta^{-1},\beta+\lambda^2)_{2,F}\,,$$ as required.\end{proof}

\begin{lem}\label{leftoright} Let $\alpha\in F, \beta,\gamma\in F^\times$ with $\beta\neq \alpha^2$.
 Then 
the quaternion algebras $\psi=[\alpha^2+\beta,\gamma)_{2,F}$ and $\phi=[\beta+\alpha^4 \beta^{-1},\beta+\alpha^2)_{2,F}$ are separably linked and  $\Sigma_{\{ \psi, \phi \} }$ is Witt equivalent to $\langle \langle \gamma,\beta,\alpha]]$ .
\end{lem}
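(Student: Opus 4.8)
The plan is to exhibit an explicit quadratic $1$-fold Pfister factor common to $\psi$ and $\phi$, thereby establishing separable linkage, and then to compute $\Sigma_{\{\psi,\phi\}}$ directly using \cref{lem:tight}(1). First I would observe that by \cref{iso}(1) we have $\psi = [\alpha^2+\beta,\gamma)_{2,F} \cong [\beta,\gamma)_{2,F}$, since $\alpha^2 + (\alpha^2+\beta) = \beta$ lies in the image of the Artin–Schreier map shift; more precisely $[\alpha^2+\beta,\gamma)_{2,F}\cong[(\alpha)^2 + (\alpha^2+\beta),\gamma)_{2,F} = [\beta,\gamma)_{2,F}$. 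On the other side, \cref{rightslotsquare} applied with the pair $(\beta,\gamma)$ and $\lambda = \alpha$ (note the hypothesis $\beta \neq \alpha^2$ guarantees $\lambda^2 \neq \beta$, and $\beta \neq 0$) yields exactly $[\beta,\gamma)_{2,F} = [\beta + \alpha^2\beta\gamma^{-1}, \gamma + \alpha^2)_{2,F}$ — but that is not quite $\phi$, so I would instead apply \cref{rightslotsquare} in the form that converts $\phi$ back toward a common shape. Concretely, starting from $\phi = [\beta+\alpha^4\beta^{-1},\beta+\alpha^2)_{2,F}$ and applying \cref{rightslotsquare} with $\lambda = \alpha$ to the slot $(\beta+\alpha^4\beta^{-1}, \beta+\alpha^2)$ — valid since $\alpha^2 \neq \beta + \alpha^2$ would require $\beta = 0$, excluded — should produce $[\alpha^2+\beta,\beta)_{2,F}$ after simplifying $(\beta+\alpha^4\beta^{-1}) + \alpha^2(\beta+\alpha^4\beta^{-1})(\beta+\alpha^2)^{-1}$ and $(\beta+\alpha^2)+\alpha^2 = \beta$.

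The upshot of these manipulations is that both $\psi$ and $\phi$ become isomorphic to quaternion algebras of the form $[\,\cdot\,,\beta)_{2,F}$, i.e. both admit the inseparable quadratic extension $F(\sqrt{\beta})$, so $\psi$ and $\phi$ are inseparably linked with common bilinear factor $\langle\langle\beta\rangle\rangle$ and quadratic $1$-fold factors coming from the left slots. But the lemma asserts \emph{separable} linkage, so I would then use \cref{iso}(1) once more — specifically $[\alpha,\beta)_{2,F}\cong[\alpha,\beta\alpha)_{2,F}$ for $\alpha\neq 0$, which rescales the right (separable-governing) slot — to rewrite both algebras with a common right slot. Writing $\psi\cong[\beta,\gamma)_{2,F}$ and $\phi\cong[\alpha^2+\beta,\beta)_{2,F}$, I want a single element $d$ and scalars $s_1,s_2$ with $\psi\cong[d,s_1)_{2,F}$, $\phi\cong[d,s_2)_{2,F}$; then $\langle\langle s_i\rangle\rangle\otimes[1,\text{-}d]$-type factorization gives the common quadratic $1$-fold Pfister factor $\phi_0 = [1,d]$ (up to the isometries in \cref{iso}). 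The natural candidate is $d$ such that both $\beta$ and $\alpha^2+\beta$ become conjugate to $d$ after scaling the other slot — and since $\beta$ and $\alpha^2 + \beta = (\alpha)^2 + \beta$ differ by an Artin–Schreier-type shift only when the other slot is trivial, the correct move is to use \cref{iso}(2): $\psi\otimes\phi$ is Brauer equivalent to something of $2$-fold Pfister type, which is precisely what tightness of the pair $\{\psi,\phi\}$ requires.

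For the computation of $\Sigma_{\{\psi,\phi\}}$: once separable linkage is in hand, say $\psi = \langle\langle a_1\rangle\rangle\otimes\phi_0$ and $\phi = \langle\langle a_2\rangle\rangle\otimes\phi_0$ with $\phi_0$ a quadratic $1$-fold Pfister form, \cref{lem:tight}(1) (with $n=2$, $k=2$) gives immediately that $\Sigma_{\{\psi,\phi\}} = \langle\langle a_1,a_2\rangle\rangle\otimes\phi_0$, a quadratic $3$-fold Pfister form. It then remains to identify $\langle\langle a_1,a_2\rangle\rangle\otimes\phi_0$ with $\langle\langle\gamma,\beta,\alpha]]$ in the Witt group; this is a matter of tracking the slots through the explicit isometries of the first paragraph and applying the standard Pfister-form slot relations (chain equivalence), together with $[\alpha,\beta)_{2,F}$ having norm form $\langle\langle\beta,\alpha]]$. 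I expect the main obstacle to be bookkeeping: choosing the isometries from \cref{iso} and \cref{rightslotsquare} in exactly the order that lands $\psi$ and $\phi$ on a literally common quadratic $1$-fold Pfister factor (rather than merely on Brauer-equivalent algebras), and then verifying the resulting $3$-fold Pfister form is $\langle\langle\gamma,\beta,\alpha]]$ rather than some chain-equivalent rearrangement — the conceptual content is entirely contained in Lemmas \ref{iso}, \ref{rightslotsquare}, and \ref{lem:tight}.
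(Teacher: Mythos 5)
Your application of \cref{rightslotsquare} with $\lambda=\alpha$ to $\phi$ is correct and is in fact the heart of the paper's own proof: it yields $\phi=[\alpha^2+\beta,\beta)_{2,F}$. But the surrounding argument has a genuine gap. The opening claim $\psi=[\alpha^2+\beta,\gamma)_{2,F}\cong[\beta,\gamma)_{2,F}$ misreads \cref{iso}~$(1)$: that lemma replaces the left slot $\alpha$ by $\alpha^2+\beta$ where $\beta$ is the \emph{right slot of the same algebra} (equivalently, the left slot may be shifted by elements $\lambda^2+\lambda$ in the image of the Artin--Schreier map), whereas you shift it by the pure square $\alpha^2$. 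In general $[\delta,\gamma)_{2,F}\not\cong[\delta+\lambda^2,\gamma)_{2,F}$, so this reduction is false. It then derails the linkage argument: you assert that $\psi$ and $\phi$ both contain $F(\sqrt{\beta})$, which does not follow (your $[\beta,\gamma)_{2,F}$ has $\gamma$, not $\beta$, in the inseparable slot), and the subsequent search for a common left slot $d$ is left unresolved, so separable linkage is never actually established. The fix is to delete the first reduction entirely: once $\phi=[\alpha^2+\beta,\beta)_{2,F}$ is known, it visibly shares the left slot $\alpha^2+\beta$ with $\psi=[\alpha^2+\beta,\gamma)_{2,F}$, i.e.\ the two algebras share the separable quadratic extension $F[x]/(x^2+x+\alpha^2+\beta)$ and the common quadratic $1$-fold Pfister factor $[1,\alpha^2+\beta]$; separable linkage is immediate, and tightness follows from \cref{iso}~$(2)$ as you indicate.

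The second gap is that the identification of $\Sigma_{\{\psi,\phi\}}$ with $\langle\langle\gamma,\beta,\alpha]]$ is deferred to ``bookkeeping'' and never carried out. With the common factor as above, \cref{lem:tight}~$(1)$ gives
$\Sigma_{\{\psi,\phi\}}=\langle\langle\gamma,\beta\rangle\rangle\otimes[1,\alpha^2+\beta]=\langle\langle\gamma,\beta,\alpha^2+\beta]]$,
and the remaining step is exactly one (correct) application of \cref{iso}~$(1)$ at the level of norm forms, namely $\langle\langle\beta,\alpha]]=\langle\langle\beta,\alpha^2+\beta]]$, which after tensoring with $\langle\langle\gamma\rangle\rangle$ gives the stated class. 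This is short but it is a necessary step, not a chain-equivalence rearrangement, and it must be recorded.
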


\begin{proof}
Let $\phi'= [\alpha^2+\beta,\beta)_{2,F}$. Then 
by   \cref{rightslotsquare} (taking $\lambda=\alpha$) we have  
$$\phi' = [\alpha^2+\beta+\alpha^2(\alpha^2+\beta)\beta^{-1},\beta+\alpha^2)_{2,F} = [\beta+\alpha^4\beta^{-1},\beta+\alpha^2)_{2,F}= \phi\,.$$
Hence $\psi$ and $\phi$ are separably  linked. Furthermore, by \cref{iso}, $(2)$ we have that 
$[\alpha^2+\beta,\beta\gamma)_{2,F}$ is the quaternion representative of $\psi\otimes_F \phi$ in the Brauer group. Hence $S=\{\psi,\phi\}$ is a tight pair, and by Lemma \ref{lem:tight} (1) and  \cref{iso} $(1)$ we have  
$\Sigma_S=\langle \langle \gamma,\beta,\alpha^2+\beta]]=\langle \langle \gamma,\beta,\alpha^2+\beta]]$.
%
%
%
%
\end{proof}

\begin{prop}\label{Witteq} Let $\alpha\in F, \beta,\gamma\in F^\times$ with $\beta\neq \alpha^2$.
Then the quaternion algebras  $\psi=[\beta+\alpha^4 \beta^{-1},\gamma)_{2,F}$, $\phi=[\beta+\alpha^4 \beta^{-1},\gamma(\alpha^2+\beta))_{2,F}$ and $\pi=[\alpha^2+\beta,\gamma)_{2,F}$ form a tight triplet and   $\Sigma_{\{ \psi,\phi,\pi \} }$ is Witt equivalent to $\langle \langle \gamma,\beta,\alpha]]$.
\end{prop}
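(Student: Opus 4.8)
The plan is to realize $\{\psi, \phi, \pi\}$ as a tight triplet by computing all the partial orthogonal sums in $I_q^2 F / I_q^3 F$ and then exhibit $\Sigma_{\{\psi,\phi,\pi\}}$ by a chain of pairwise computations that reduce to \cref{leftoright}. Writing $a = \beta + \alpha^4\beta^{-1}$ and $b = \alpha^2+\beta$ for brevity, the norm forms are $\langle\langle \gamma, a]]$, $\langle\langle \gamma(\alpha^2+\beta), a]]$, and $\langle\langle \gamma, b]]$. First I would record the key Brauer-equivalences from \cref{iso}$(2)$: $\psi \otimes \phi$ is Brauer-equivalent to $[a, \gamma\cdot\gamma(\alpha^2+\beta))_{2,F} = [a, \gamma^2(\alpha^2+\beta))_{2,F}$, and by \cref{iso}$(1)$ (the $\alpha,\beta\alpha$ relation, applied on the right slot) this equals $[a, \alpha^2+\beta)_{2,F}$ up to the square factor $\gamma^2$. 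So in $I_q^2 F/I_q^3 F$ the class of $\psi \perp \phi$ is that of $\langle\langle \alpha^2+\beta, a]]$, again a quadratic $2$-fold Pfister form. Similarly $\psi \perp \pi$ has the class of $[a\cdot b, \gamma)_{2,F}$'s norm form (using \cref{iso}$(2)$ in the left slot), and one checks $a\cdot b = (\beta+\alpha^4\beta^{-1})(\alpha^2+\beta)$; likewise for $\phi \perp \pi$ and the total sum $\psi\perp\phi\perp\pi$. The point is only that each such class is again represented by a quadratic $2$-fold Pfister form, which is exactly tightness.

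Next, to identify $\Sigma_S$, I would use the definition: $\Sigma_S$ is the Witt class of the orthogonal sum over the nontrivial elements of the group $G \subseteq I_q^2 F/I_q^3 F$ generated by the three classes. The cleanest route is to observe that \cref{leftoright}, applied with the roles of the parameters as given there, already tells us that the pair $\{[\alpha^2+\beta, \gamma)_{2,F},\, [\beta+\alpha^4\beta^{-1}, \beta+\alpha^2)_{2,F}\} = \{\pi, \phi'\}$ (where $\phi' = [a, b)_{2,F}$) is tight with $\Sigma_{\{\pi,\phi'\}}$ Witt equivalent to $\langle\langle \gamma,\beta,\alpha]]$. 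So I would first show that the group generated by $\psi,\phi,\pi$ coincides with the group generated by $\pi$ and $\phi'$ together with $\psi$, and then argue that adjoining $\psi$ does not change $\Sigma_S$ beyond a controlled correction — or, more directly, compute $\Sigma_{\{\psi,\phi,\pi\}}$ from scratch as the sum of the (at most seven) Pfister representatives and watch the terms telescope.

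Concretely, the main computation is: list the seven classes $\psi$, $\phi$, $\pi$, $\psi\perp\phi$, $\psi\perp\pi$, $\phi\perp\pi$, $\psi\perp\phi\perp\pi$ in $I_q^2 F/I_q^3 F$, each represented by a specific quadratic $2$-fold Pfister form via \cref{iso}, take the orthogonal sum of all seven Pfister representatives, and simplify in $I_q^2 F$ (not just mod $I_q^3 F$) using Pfister-form arithmetic — the linkage relations $\langle\langle x\rangle\rangle\perp\langle\langle y\rangle\rangle = \langle\langle xy\rangle\rangle \perp (\text{something in } I_q^{\ge 3})$ won't suffice, so I expect to invoke \cref{rightslotsquare} and \cref{iso}$(1)$ to produce genuine isometries that collapse the sum. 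The target is to land on the $3$-fold Pfister form $\langle\langle \gamma,\beta,\alpha]]$ exactly in the Witt group.

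The main obstacle I anticipate is the bookkeeping in this last step: keeping the left-slot and right-slot data consistent across all seven terms, since \cref{iso}$(1)$ and \cref{rightslotsquare} each rewrite a quaternion symbol in a way that mixes $\alpha$, $\beta$, $\gamma$ nontrivially, and a sign/square error anywhere derails the telescoping. A secondary subtlety is making sure $G$ really has order $8$ (equivalently that no nontrivial $\F_2$-combination of the three classes is already trivial mod $I_q^3 F$) so that $\Sigma_S$ is the sum of exactly seven terms; if $G$ is smaller the definition still applies but the count changes, so I would verify genericity of the parameters or handle the degenerate cases separately. Given \cref{leftoright} as a template, though, I expect the degenerate cases to be mild, and the bulk of the work is the explicit $2$-fold Pfister arithmetic.
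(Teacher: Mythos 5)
Your overall architecture matches the paper's: establish tightness by showing each pairwise (and the triple) Brauer product is again a quaternion algebra, and identify $\Sigma_S$ by reducing to the pair $\{\xi,\pi\}$ with $\xi=[\beta+\alpha^4\beta^{-1},\alpha^2+\beta)_{2,F}$, which \cref{leftoright} handles. But there are two genuine problems. First, a computational error: when two quaternion symbols share the \emph{right} slot, their product combines the \emph{left} (Artin--Schreier) slots additively, not multiplicatively; so $\psi\otimes\pi$ is Brauer equivalent to $[a+b,\gamma)_{2,F}$ with $a+b=\alpha^2\beta^{-1}(\alpha^2+\beta)$, not to $[ab,\gamma)_{2,F}$. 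Your ``check'' that $a\cdot b=(\beta+\alpha^4\beta^{-1})(\alpha^2+\beta)$ verifies nothing, and the same confusion would propagate to $\phi\perp\pi$ and to the triple product, where one in fact needs \cref{iso}~$(1)$ to first rewrite $\pi$ as $[\alpha^2+\beta,\gamma(\alpha^2+\beta))_{2,F}$ and then the observation (via \cref{rightslotsquare}, as in the proof of \cref{leftoright}) that $\xi=[\alpha^2+\beta,\beta)_{2,F}$, so that $\xi$ and $\pi$ share a left slot.

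Second, and more seriously, the identification of $\Sigma_S$ is left as a plan rather than carried out. You correctly sense that the answer should come from $\Sigma_{\{\xi,\pi\}}$ via \cref{leftoright}, but the step from the seven-term sum defining $\Sigma_S$ down to the three-term sum defining $\Sigma_{\{\xi,\pi\}}$ is exactly the content you defer (``watch the terms telescope,'' ``a controlled correction''). The paper does this in one line by citing \cite[Lemma 11.2]{ChapmanGilatVishne}: when the pairs $\psi,\phi$ and $\psi,\pi$ are separably linked, $\Sigma_S$ is Witt equivalent to $\Sigma_{\{\xi,\pi\}}$ where $\xi$ represents $\psi\otimes_F\phi$. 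Without that lemma or an explicit replacement for it, your direct computation would have to produce genuine isometries cancelling four of the seven Pfister representatives in the Witt group, which is precisely the nontrivial part; as written the proposal identifies the target but does not prove the equality. (Your worry about whether $G$ has order $8$ is legitimate but secondary; the separable-linkage hypotheses feeding into the CGV lemma are what make the count irrelevant.)
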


\begin{proof}
The algebra $\pi$ is evidently inseparably, and hence separably (by \cite{Lam:2002}), linked to $\psi$ and to $\phi$ using \cref{iso}, $(1)$. Further, $\psi$ and $\phi$ are separably linked.
By \cref{iso} $(2)$, we have that  $\xi= [\beta+\alpha^4\beta^{-1},\alpha^2+\beta)_{2,F}$ is the quaternion representative of $\psi \otimes_F \phi$  in the Brauer group.
By  \cref{leftoright},  $\xi$  is separably linked to $\pi$ as well.
Using   \cref{iso} $(2)$ we see that the tensor product of any two separably linked quaternion algebras is  represented by a quaternion algebra in the Brauer group.
We conclude that $S=\{ \psi,\phi,\pi \}$ is a tight triplet of quaternion algebras.

Since the pairs $\psi, \phi$  and $\psi,\pi$ are separably linked, by \cite[Lemma 11.2]{ChapmanGilatVishne} we have that $\Sigma_S$ is Witt equivalent to $\Sigma_{\{ \xi,\pi\} }$. Finally  by  \cref{leftoright} we have that $\Sigma_{\{ \xi,\pi\} }$ is Witt equivalent to $\langle \langle \gamma,\beta,\alpha]]$.
\end{proof}

\begin{thm}\label{mainthm}
If every set of three quaternion algebras over $F$ is linked, then $I_q^3 F=0$.
\end{thm}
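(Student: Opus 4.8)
The plan is to mimic the structure of the proof of the higher-fold case (the first theorem of \cref{SectionTriple}), replacing the algebraic manipulation $\langle\langle a,b,c\rangle\rangle\otimes\psi$ with a direct construction of a tight triplet of quaternion algebras whose invariant $\Sigma_S$ recovers an arbitrary quadratic $3$-fold Pfister form. Concretely, I would start with a quadratic $3$-fold Pfister form $\Phi=\langle\langle\gamma,\beta,\alpha]]$ and exhibit three quaternion algebras whose tight-sum invariant is Witt equivalent to $\Phi$; the work of \cref{Witteq} does exactly this, producing the triplet $\psi=[\beta+\alpha^4\beta^{-1},\gamma)_{2,F}$, $\phi=[\beta+\alpha^4\beta^{-1},\gamma(\alpha^2+\beta))_{2,F}$, $\pi=[\alpha^2+\beta,\gamma)_{2,F}$ with $\Sigma_{\{\psi,\phi,\pi\}}$ Witt equivalent to $\langle\langle\gamma,\beta,\alpha]]$, valid whenever $\beta\neq\alpha^2$. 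So the first step is to reduce to an arbitrary $\Phi$ and observe that we may write $\Phi=\langle\langle\gamma,\beta,\alpha]]$; if $\beta=\alpha^2$ then $\langle\langle\beta,\alpha]]=\langle\langle\alpha^2,\alpha]]$ is already hyperbolic (it has the diagonal entry $1,\alpha^2$ with $\alpha^2$ a square, forcing isotropy), so $\Phi$ is hyperbolic and there is nothing to prove; hence we may assume $\beta\neq\alpha^2$ and invoke \cref{Witteq}.

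Next I would apply the hypothesis: the triplet $S=\{\psi,\phi,\pi\}$ of quaternion algebras is, by assumption, linked, meaning it is separably or inseparably linked (using the identification of quaternion algebras with their norm forms, and recalling from \cite{Lam:2002}/\cite{Draxl:1975} that inseparable linkage implies separable linkage, so really ``linked'' here means the three $2$-fold Pfister forms share a common $1$-fold factor of one of the two types). Since $S$ is a tight triplet by \cref{Witteq}, I can feed it into \cref{lem:tight}: if $S$ is separably linked then $\Sigma_S$ is the Witt class of a quadratic $4$-fold Pfister form, and if $S$ is inseparably linked then $\Sigma_S$ is the trivial Witt class.

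Finally I would conclude as in the higher-fold theorem. In the inseparable case $\Sigma_S=0$ in the Witt group, and since $\Sigma_S$ is Witt equivalent to $\Phi$, the form $\Phi$ is hyperbolic. In the separable case $\Sigma_S$ is Witt equivalent to a quadratic $4$-fold Pfister form which is simultaneously Witt equivalent to the $3$-fold Pfister form $\Phi$; comparing dimensions of anisotropic parts via the Arason--Pfister Hauptsatz \cite[23.7]{EKM} — a nonzero anisotropic form Witt equivalent to a $4$-fold Pfister form has dimension $16$, but it is also Witt equivalent to $\Phi$ whose anisotropic part has dimension at most $8$ — forces both to be hyperbolic, so again $\Phi$ is hyperbolic. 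Since $\Phi$ was an arbitrary quadratic $3$-fold Pfister form and $I_q^3F$ is generated by scalar multiples of such forms (and scalar multiples of hyperbolic forms are hyperbolic), we get $I_q^3F=0$.

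The main obstacle, and the only genuinely nontrivial input, is the construction underlying \cref{Witteq}: namely that one can manufacture, from \emph{any} $3$-fold Pfister data $(\alpha,\beta,\gamma)$, a tight triplet of quaternion algebras realizing it — this is where the ad hoc identities of \cref{rightslotsquare}, \cref{leftoright}, and the $\langle 1,\lambda\rangle$-type change of generators do the real work, and it is the reason the degree-$2$ case had to be handled separately from the general argument. Once \cref{Witteq} is in hand, the deduction above is essentially formal, modulo the routine check that the degenerate case $\beta=\alpha^2$ yields a hyperbolic form and the standard reduction that it suffices to kill $3$-fold Pfister forms to kill all of $I_q^3F$.
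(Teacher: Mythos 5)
Your proposal is correct and follows essentially the same route as the paper's own proof: reduce to an arbitrary $\langle\langle\gamma,\beta,\alpha]]$, dispose of the degenerate case $\beta=\alpha^2$, invoke \cref{Witteq} to realize the form as $\Sigma_S$ for a tight triplet of quaternion algebras, and then apply \cref{lem:tight} together with the Hauptsatz to force hyperbolicity in both the separable and inseparable cases. The only difference is that you spell out the Hauptsatz dimension count and the $\beta=\alpha^2$ check more explicitly than the paper does.
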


\begin{proof}
Let $\varphi=\langle \langle \gamma,\beta,\alpha]]$ be an arbitrary quadratic 3-fold Pfister form. If $\beta=\alpha^2$ then $\varphi$ is hyperbolic. 
Otherwise, by  \cref{Witteq}, $\varphi$ is Witt equivalent to $\Sigma_S$ for some linked set $S$ of three quaternion algebras.
By the assumption, these three quaternion algebras share a common maximal subfield.
If this subfield is a separable field extension of $F$ then by \cref{lem:tight}, $(1)$ we have that   $\Sigma_G$ is Witt equivalent to a quadratic 4-fold Pfister form.
If this subfield is an inseparable field extension of $F$ then by \cref{lem:tight}, $(2)$ we have that  $\Sigma_G$ is hyperbolic.
In both cases, $\varphi$ must be hyperbolic, and so $I_q^3 F=0$.
\end{proof}

\begin{cor}\label{triplelinkageuinv}
If   $F$ is  linked  and  $I^3_q(F)=0$ then $u(F) \leq 4$. In particular, if every set of  three quaternion algebras over $F$ is linked then  $u(F) \leq 4$.
\end{cor}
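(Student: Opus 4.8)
The statement to prove is \cref{triplelinkageuinv}: if $F$ is linked and $I_q^3 F = 0$, then $u(F) \leq 4$; and consequently, if every triple of quaternion algebras over $F$ is linked, then $u(F) \leq 4$.

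\medskip

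The plan is to derive the bound on $u(F)$ from the two hypotheses in the first sentence, and then to obtain the second sentence directly from \cref{mainthm}, which tells us that triple linkage forces $I_q^3 F = 0$; combined with the fact that triple linkage trivially implies ordinary (pairwise) linkage, i.e.\ that $F$ is linked, the second sentence follows immediately from the first. So the mathematical content lies entirely in proving that linkage together with $I_q^3 F = 0$ gives $u(F) \leq 4$.

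\medskip

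For the first sentence I would argue as follows. Recall that $u(F)$ is the supremum of dimensions of nonsingular anisotropic quadratic forms of finite order in $W_q F$, and since $I_q^3 F = 0$ every such form already lies in $W_q F$ with the relevant torsion condition automatic; the point is to bound the dimension of an anisotropic nonsingular form. First I would reduce to forms in $I_q^2 F$: given an anisotropic nonsingular form $q$, after scaling one may assume $q$ represents $1$, so $q \perp \langle\langle 1, \dots\rangle]]$-type adjustments place a Witt-equivalent form in $I_q^2 F$ at the cost of controlled dimension, or more cleanly one uses that $W_q F / I_q^2 F$ is small and that anisotropic forms of small dimension outside $I_q^2 F$ are already bounded. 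Then, working in $I_q^2 F$ with $I_q^3 F = 0$, one shows that any anisotropic form in $I_q^2 F$ of dimension $\geq 6$ leads to a contradiction: writing it as a sum of quadratic $2$-fold Pfister forms (scalar multiples), the linkage hypothesis lets one combine two of the Pfister summands — via \cref{iso} $(2)$ in the separable case or the inseparable analogue — into a single $2$-fold Pfister form modulo $I_q^3 F = 0$, hence modulo nothing, thereby reducing the number of Pfister summands needed to represent the class. Iterating, every class in $I_q^2 F$ is represented by a single quadratic $2$-fold Pfister form (i.e.\ $I_q^2 F$ is ``linked'' in the strong sense), and an anisotropic quadratic $2$-fold Pfister form has dimension $4$; combined with the earlier reduction this yields $u(F) \leq 4$.

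\medskip

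The main obstacle I anticipate is the reduction/bookkeeping step: carefully showing that linkage plus $I_q^3 F = 0$ lets one always reduce a sum of several $2$-fold Pfister forms to a single one while keeping track of the dimension count (Witt index versus dimension), and handling the passage between $W_q F$ and $I_q^2 F$ cleanly. In fact, the cleanest route is to invoke known results rather than reprove them: the hypothesis that $F$ is linked, for $\operatorname{char}(F) = 2$, together with \cite[Theorem 3.1]{Baeza:1982} and \cite[Corollary 5.2]{ChapmanDolphin} already constrains $u(F)$ to lie in $\{0, 2, 4, 8\}$, and the additional input $I_q^3 F = 0$ rules out $u(F) = 8$ (since $u(F) = 8$ with linkage is exactly the case that produces a nonzero class in $I_q^3 F$, the anisotropic $3$-fold Pfister form built from linked quaternion data as in \cref{Witteq}). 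Thus the proof reduces to: cite the classification of $u$-invariants of linked fields of characteristic $2$, then observe that $u(F) = 8$ would contradict $I_q^3 F = 0$, leaving $u(F) \leq 4$. For the second sentence, apply \cref{mainthm} to get $I_q^3 F = 0$, note triple linkage implies pairwise linkage so $F$ is linked, and invoke the first sentence.
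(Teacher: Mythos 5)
Your overall strategy for the first sentence --- reduce to anisotropic forms whose Witt class lies in $I_q^2 F$, write such a form as a sum of quadratic $2$-fold Pfister forms (using $I_q^3 F=0$ to replace scalar multiples of Pfister forms by Pfister forms), and use pairwise linkage to collapse the sum to a single $2$-fold Pfister form of dimension $4$ --- is exactly the paper's, and your handling of the second sentence via \cref{mainthm} also matches. The gap is in the step you yourself flag as the main obstacle: passing from an arbitrary anisotropic nonsingular form of dimension $u(F)$ to one whose Witt class lies in $I_q^2 F$ \emph{without losing dimension}. The paper does not reprove this; it cites \cite[Lemma 4.3]{Chapman:2017}, which provides an anisotropic nonsingular form of dimension exactly $u(F)$ with class in $I_q^2 F$ when $I_q^3 F=0$. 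Your sketched substitutes do not deliver this: adjusting an anisotropic form of dimension $u(F)$ by a binary form to kill the Arf invariant only produces an anisotropic form in $I_q^2 F$ of dimension at least $u(F)-2$, which yields $u(F)\leq 6$ rather than $u(F)\leq 4$; and the remark that ``$W_qF/I_q^2F$ is small and anisotropic forms outside $I_q^2F$ are already bounded'' is not an argument.

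Your proposed ``cleanest route'' is circular. Citing \cite[Corollary 5.2]{ChapmanDolphin} for $u(F)\in\{0,2,4,8\}$ on a linked field is fine, but the assertion that $u(F)=8$ together with linkage forces a nonzero class in $I_q^3F$ is precisely the contrapositive of the statement being proved, and \cref{Witteq} does not supply it: that proposition realizes an arbitrary quadratic $3$-fold Pfister form as $\Sigma_S$ for a tight triple of quaternion algebras, and says nothing about producing an \emph{anisotropic} $3$-fold Pfister form from $u(F)=8$. Unless you can point to an independent result establishing that implication, this route assumes the conclusion. (Incidentally, combining your two routes would close the gap without the cited lemma: the lossy reduction gives $u(F)\leq 6$, and $u(F)\in\{0,2,4,8\}$ then forces $u(F)\leq 4$; but you did not make that argument.)
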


\begin{proof}
If  $I_q^3 F=0$, by \cite[Lemma 4.3]{Chapman:2017} then there exists an anisotropic nonsingular quadratic form of dimension $u(F)$ whose Witt class is in $I_q^2 F$.
However, such a form is a direct sum of 2-fold Pfister forms (recall that when $I_q^3 F=0$, 2-fold Pfister forms are isometric to their similar forms), and since every two quaternion algebras over $F$ are linked, such a direct sum is Witt equivalent to one 2-fold Pfister form. Consequently, $u(F) \leq 4$. The second statement follows from \cref{mainthm}.
\end{proof}

\section{Characteristic different from 2}

Let $F$ be now a field of $\operatorname{char}(F) \neq 2$.
It is the case that  $\tilde{u}(F)= 4$ implies that every three quaternion algebras over $F$ are linked?
In this section we prove that this is indeed the case if the field is nonreal,  in which  case  $\tilde{u}(F)=u(F)$. Note that this question is trivial if $u(F)<4$, as  then there are no division quaternion algebras over $F$.

The main tool is the complex constructed in \cite{Peyre1995} and studied further in \cite{QueguinerTignol:2015}. For a finite subset $U$ of $Br(F)$, the Brauer group of $F$, we let $F^\times \cdot U$ denote the subgroup of $H^3(F,\mu_2)$ generated by classes $\lambda\cdot \alpha$ with $\lambda\in F^\times$ and $\alpha\in U$.
\begin{prop}[{\cite[Theorem 3.13]{QueguinerTignol:2015}}]\label{Peyre}
Let $S=\{Q_1,Q_2,Q_3\}$ be  a tight set  of quaternion algebras over $F$
and 
 $U$ be the subgroup $\langle Q_1,Q_2,Q_3 \rangle$ of the Brauer group $\prescript{}{2}Br(F)\cong\prescript{}{2} H^2(F,\mathbb{Q}/\mathbb{Z} (2))$. 
 Further, let $M$ be  the function field of the Cartesian product of the underlying Severi-Brauer varieties of $Q_1,Q_2,Q_3$. Then 
   the homology of the complex
$$\xymatrix{F^\times \cdot U \ar[r] & H^3(F,\mathbb{Q}/\mathbb{Z} (2)) \ar[r]^{\operatorname{res}}& H^3(M,\mathbb{Q}/\mathbb{Z} (2))}$$
has order 1 or 2.
It is of order 1 if and only if $S$ is linked.
\end{prop}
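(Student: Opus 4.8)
Since the statement is essentially \cite[Theorem 3.13]{QueguinerTignol:2015}, the plan is to recall why that theorem holds, after reducing the present formulation to the shape in which it is proved there. Recall Peyre's construction \cite{Peyre1995}: with $X$ the product of the Severi-Brauer varieties of $Q_1,Q_2,Q_3$ and $M=F(X)$, the left-hand map sends a generator $\lambda\cdot\alpha$ (with $\lambda\in F^\times$, $\alpha\in U$) to its image in $H^3(F,\mathbb Q/\mathbb Z(2))$, while the right-hand arrow is restriction to $M$. That the composite vanishes is immediate: $M$ carries the generic point of each $SB(Q_i)$, hence splits every $Q_i$ and so all of $U$, so each $\lambda\cdot\alpha$ restricts to zero over $M$. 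Thus the whole content is the size of $H:=\ker(\operatorname{res})/(F^\times\cdot U)$ together with the criterion for $H$ to be trivial; note $F^\times\cdot U=\sum_i F^\times\cup[Q_i]$ by biadditivity of the cup product, as each element of $U$ is a $\mathbb Z/2$-combination of the $[Q_i]$.

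The heart of the matter is the computation of $\ker\bigl(\operatorname{res}\colon H^3(F)\to H^3(M)\bigr)$. One first reduces to $2$-primary coefficients --- a conic, hence any product of conics, kills no odd torsion in $H^3$, by a restriction-corestriction argument through a degree-$2$ splitting field --- and then studies $X$ via the Bloch-Ogus-Gabber coniveau spectral sequence together with the norm residue isomorphism (equivalently, one peels off the three factors one at a time, starting from the classical identity $\ker\bigl(H^3(F)\to H^3(F(SB\,Q))\bigr)=F^\times\cup[Q]$ for a single quaternion algebra). The outcome --- the technical core of \cite{Peyre1995} and \cite{QueguinerTignol:2015} --- is that, under the tightness (``quaternionic'') hypothesis on $U$, this kernel exceeds $\sum_i F^\times\cup[Q_i]$ by at most one copy of $\mathbb Z/2\mathbb Z$, generated by an explicit class $\epsilon$ canonically attached to the triple (a ``triple-product'' type invariant of $Q_1,Q_2,Q_3$; compare the degree-$3$ invariant of \cite{Sivatski:2014}). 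Hence $|H|\in\{1,2\}$.

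It remains to decide when $H$ is trivial, which in view of $|H|\le 2$ amounts to deciding when $\epsilon\in F^\times\cdot U$; the claim is that this happens precisely when $S$ is linked. In one direction, if $Q_i=(a,b_i)$ share the common slot $a$, then $M$ becomes purely transcendental after the quadratic base change $F\to F(\sqrt a)$, and an argument with the restriction sequence for that extension forces every class killed by $\operatorname{res}$ --- in particular $\epsilon$ --- into $F^\times\cdot U$. Conversely, if $\epsilon\in F^\times\cdot U$, then unwinding such a decomposition (equivalently, specializing $\epsilon$ along the generic points of the three conics) produces a quadratic field extension of $F$ common to $Q_1,Q_2,Q_3$, i.e. linkage. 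Together with the bound $|H|\le 2$ this yields the ``order $1$ iff linked'' clause and completes the proposition. (This is exactly what feeds into the next section: a nonreal field with $u(F)=4$ has $I^3F=0$, hence $H^3(F,\mu_2)=0$; since the homology is $2$-primary of order at most $2$, it must then vanish, so every tight --- hence, over such a field, every --- triple of quaternion algebras is linked.)

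The \emph{main obstacle} is the kernel computation of the second paragraph: showing that, for a product of at most three Severi-Brauer varieties of quaternion algebras generating a quaternionic subgroup, the degree-$3$ restriction kernel exceeds $\sum_i F^\times\cup[Q_i]$ by no more than $\mathbb Z/2\mathbb Z$, and exhibiting $\epsilon$ in closed form. This is the genuine work of \cite{Peyre1995,QueguinerTignol:2015}, resting on motivic and \'etale cohomology machinery --- the norm residue isomorphism, the coniveau filtration, Karpenko-style computations of the Chow groups and cohomology of products of Severi-Brauer varieties. In the present paper nothing more is needed than citing \cite[Theorem 3.13]{QueguinerTignol:2015}; the above is merely an account of why that result is true.
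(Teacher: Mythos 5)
The paper offers no proof of this proposition at all: it is imported verbatim as \cite[Theorem 3.13]{QueguinerTignol:2015}, with the citation in the theorem header serving as the entire justification. Your proposal correctly identifies that citing that theorem is all that is needed here, so it takes the same approach as the paper; the expository sketch of Peyre's and Qu\'eguiner-Mathieu--Tignol's argument that you append is a reasonable account of the external proof but plays no role in the paper itself.
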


\begin{prop}\label{triplinkeasier}
Let $F$ be a field with $\operatorname{char}(F) \neq 2$ and $I^3_q(F)=0$.
Then every tight  set of  three quaternion algebras over $F$ is linked.
\end{prop}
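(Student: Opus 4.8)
The plan is to use \Pref{Peyre} to reduce the question to showing that the homology of Peyre's complex
$$\xymatrix{F^\times \cdot U \ar[r] & H^3(F,\mathbb{Q}/\mathbb{Z}(2)) \ar[r]^{\operatorname{res}}& H^3(M,\mathbb{Q}/\mathbb{Z}(2))}$$
is trivial whenever $S=\{Q_1,Q_2,Q_3\}$ is a tight set of quaternion algebras. Since $\chr(F)\neq 2$ and $I_q^3F=0$, the Arason--Pfister machinery tells us that the relevant cohomology is all $2$-torsion: $H^3(F,\mathbb{Q}/\mathbb{Z}(2))$ in the $2$-primary part is $H^3(F,\mu_2^{\otimes 2})\cong I^3F/I^4F$, and $I^3F=0$ forces this group to vanish. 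So the first step is to verify that under $I_q^3F=0$ (equivalently $I^3F=0$ since $\chr F\neq 2$), the middle term $H^3(F,\mathbb{Q}/\mathbb{Z}(2))$ is zero — or at least that the image of $F^\times\cdot U$ already exhausts the kernel of $\operatorname{res}$. The cleanest route is: $I^3F=0$ implies by Merkurjev--Suslin / Milnor conjecture in low degree that $H^3(F,\mu_2)=0$, and then one must handle the odd-torsion part of $\mathbb{Q}/\mathbb{Z}(2)$, which for a field with $u(F)\leq 4$ and such strong vanishing is also controlled; here I would invoke that $\operatorname{cd}_2$-type bounds plus the nonreal hypothesis (implicit in wanting $u=\tilde u$ finite) kill the relevant piece.

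Concretely, the key steps in order: (1) Recall that for a tight set $S$, the subgroup $U=\langle Q_1,Q_2,Q_3\rangle$ of $\Brp[2](F)$ consists of quaternion classes (this is essentially the definition of tightness combined with the $n=2$ case of \Dref{Invariant}). (2) Show that the middle cohomology group $H^3(F,\mathbb{Q}/\mathbb{Z}(2))$ — or its relevant $2$-primary summand — is annihilated by the hypothesis $I_q^3F=0$. For the $2$-part this is the statement $H^3(F,\mu_2)\cong I^3F/I^4F=0$. (3) Conclude that the kernel of $\operatorname{res}:H^3(F,\mathbb{Q}/\mathbb{Z}(2))\to H^3(M,\mathbb{Q}/\mathbb{Z}(2))$ is zero, hence the homology at the middle term is zero, hence by \Pref{Peyre} the set $S$ is linked. (4) If any odd-torsion contribution survives, note that restriction to the function field $M$ of a product of conics is injective on prime-to-$2$ torsion (since conics split after an odd-degree extension — indeed a quadratic one — so the Severi--Brauer varieties have points over extensions of $2$-power degree, leaving odd-torsion cohomology untouched by a standard transfer argument), so that part of the kernel is zero too.

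The main obstacle I anticipate is step (2)/(4): carefully pinning down exactly which cohomology group appears as the middle term and showing it is killed. The coefficient module $\mathbb{Q}/\mathbb{Z}(2)$ has both a $2$-primary and an odd part, and the hypothesis $I_q^3F=0$ directly controls only the $2$-primary Galois cohomology via the Milnor/Bloch--Kato isomorphism $H^n(F,\mu_2)\cong I^nF/I^{n+1}F$ for $n\leq 3$. One must argue that either the odd part does not contribute to the homology in question — via a transfer/corestriction argument using that the Severi--Brauer variety of a quaternion algebra acquires a rational point over a degree-$2$ extension, so $\operatorname{res}$ is injective on odd torsion — or that it is subsumed by the image of $F^\times\cdot U$. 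A clean way to package this is to observe that the whole complex, after localizing at $2$, has the image of $F^\times\cdot U$ landing in $H^3(F,\mu_2^{\otimes 2})$, and that group is $0$ under $I^3F=0$; the prime-to-$2$ part of the homology vanishes by the transfer argument. Combining, the homology has order $1$, and \Pref{Peyre} finishes the proof.

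\begin{proof}
By \Pref{Peyre}, it suffices to show that the homology of the complex
$$\xymatrix{F^\times \cdot U \ar[r] & H^3(F,\mathbb{Q}/\mathbb{Z}(2)) \ar[r]^{\operatorname{res}}& H^3(M,\mathbb{Q}/\mathbb{Z}(2))}$$
has order $1$, where $U=\langle Q_1,Q_2,Q_3\rangle\subseteq \Brp[2](F)$ and $M$ is the function field of the product of the Severi--Brauer varieties of $Q_1,Q_2,Q_3$. Equivalently, we must show that the kernel of $\operatorname{res}$ is contained in the image of $F^\times\cdot U$.

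Decompose $H^3(F,\mathbb{Q}/\mathbb{Z}(2))$ into its $2$-primary part and its prime-to-$2$ part. For the prime-to-$2$ part, each $Q_i$ is split by a separable quadratic extension of $F$, so the Severi--Brauer variety of $Q_i$, and hence the product, acquires a rational point over a field extension of $F$ of $2$-power degree; a standard transfer (corestriction--restriction) argument then shows that $\operatorname{res}$ is injective on the prime-to-$2$ part of $H^3(F,\mathbb{Q}/\mathbb{Z}(2))$. Thus the kernel of $\operatorname{res}$ is contained in the $2$-primary part, namely in $H^3(F,\mu_2^{\otimes 2})$.

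Since $\operatorname{char}(F)\neq 2$, the Milnor conjecture in degree $3$ (Merkurjev--Suslin, Rost) gives an isomorphism $H^3(F,\mu_2^{\otimes 2})\cong I^3 F/I^4 F$, which under the identification of quadratic forms with bilinear forms equals $I_q^3 F/I_q^4 F$. The hypothesis $I_q^3 F=0$ therefore forces $H^3(F,\mu_2^{\otimes 2})=0$.

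Consequently the kernel of $\operatorname{res}$ is zero, so the homology of the complex is trivial, of order $1$. By \Pref{Peyre}, the set $S$ is linked. Since $S$ was an arbitrary tight set of three quaternion algebras over $F$, this completes the proof.
\end{proof}
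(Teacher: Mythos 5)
Your proof is correct, but it takes a genuinely different route from the paper's. The paper never shows that $\ker(\operatorname{res})$ is zero: it uses the full strength of \cref{Peyre} --- namely that the homology has order $1$ \emph{or} $2$ --- together with a parity argument. Since ${}_2H^3(F,\mathbb{Q}/\mathbb{Z}(2))\cong I_q^3F=0$, every finite-order element of $H^3(F,\mathbb{Q}/\mathbb{Z}(2))$ has odd order; finiteness of the homology group forces every element of $\ker(\operatorname{res})$ to have finite (hence odd) order, because its double lies in the finite-order image of $F^\times\cdot U$; and a quotient of an odd-torsion group cannot have order $2$, so the homology has order $1$. You instead prove the stronger statement that $\ker(\operatorname{res})=0$ outright: the prime-to-$2$ part dies by restriction--corestriction through a splitting field of $2$-power degree over which the product of conics becomes rational, and the $2$-primary part dies by the degree-$3$ Milnor conjecture. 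Your argument therefore only needs the ``order $1$ if and only if linked'' direction of \cref{Peyre}, not the $1$-or-$2$ dichotomy, at the price of importing the transfer argument and the Bloch--Kato input explicitly; the paper's argument is shorter but leans harder on the quoted result of Qu\'eguiner-Mathieu and Tignol. Two small points of precision in your write-up: the $2$-primary part of $H^3(F,\mathbb{Q}/\mathbb{Z}(2))$ is $H^3(F,\mathbb{Q}_2/\mathbb{Z}_2(2))$ rather than $H^3(F,\mu_2^{\otimes 2})$ itself, so you should add the (standard) remark that vanishing of $H^3(F,\mu_2^{\otimes 2})$ kills all of the $2$-primary torsion via the Bockstein sequences for $\mu_{2^n}^{\otimes 2}$; and the direct-sum decomposition of $H^3(F,\mathbb{Q}/\mathbb{Z}(2))$ into primary components should be justified by noting that this group is torsion (it is a colimit of cohomology groups of finite quotients), which is exactly the point the paper's ``finite order'' step is designed to circumvent. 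With those two sentences added, your proof is complete.
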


\begin{proof}  
Let $S=\{Q_1,Q_2,Q_3\}$ be a tight set of quaternion algebras over $F$, $U$ be the subgroup of $\prescript{}{2}Br(F)$  generated by $S$  and  $M$ be the function field of the Cartesian product of the Severi-Brauer varieties of $Q_1,Q_2$ and $Q_3$.
Since $\prescript{}{2}H^3(F,\mathbb{Q}/\mathbb{Z} (2)) \cong I_q^3(F)=0$, the orders of elements in $H^3(F,\mathbb{Q}/\mathbb{Z} (2))$ must be odd or infinite.
By \cref{Peyre},  the order of the homology of the complex
$$\xymatrix{F^\times \cdot U \ar[r] & H^3(F,\mathbb{Q}/\mathbb{Z} (2)) \ar[r]^{\operatorname{res}}& H^3(M,\mathbb{Q}/\mathbb{Z} (2))}$$
is either $1$ or $2$.  In particular, the kernel of the restriction map cannot contain any elements of infinite order, as  elements of $F^\times \cdot U$ are of finite order. Therefore the order of the homology of the above complex must be $1$.
By  \cref{Peyre}, it follows that $S$ is linked.
\end{proof}

\begin{cor}\label{triplink}
Let $F$ be a nonreal field with $\operatorname{char}(F) \neq 2$ and ${u}(F)= 4$.
Then every set of three quaternion algebras over $F$ is linked.
\end{cor}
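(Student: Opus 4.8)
The plan is to deduce \cref{triplink} from \cref{triplinkeasier} by verifying its two hypotheses for $F$: that $I^3_q(F) = 0$, and that every triple of quaternion algebras over $F$ is tight.

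First I would establish $I^3_q(F) = 0$. Since $F$ is nonreal, $-1$ is a sum of squares in $F$, so the Witt ring $WF$ is torsion; hence $u(F)$, which a priori bounds only the anisotropic forms of finite order, in fact bounds the dimension of \emph{every} anisotropic quadratic form over $F$. As $u(F) = 4 < 8$, the Arason--Pfister Hauptsatz \cite[23.7]{EKM} --- a nonzero anisotropic form lying in $I^3 F$ has dimension at least $8$ --- forces $I^3 F = 0$, that is, $I^3_q(F) = 0$.

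Next I would show that every $S = \{Q_1, Q_2, Q_3\}$ is tight. The key consequence of $u(F) = 4$ here is that the tensor product of two quaternion algebras over $F$ has index at most $2$: if $Q \otimes_F Q'$ were a division algebra of degree $4$, its Albert form would be an anisotropic $6$-dimensional form over $F$, contradicting $u(F) = 4$, so $Q \otimes_F Q'$ is Brauer equivalent to a quaternion algebra. Iterating, any tensor product $Q_{i_1} \otimes_F \cdots \otimes_F Q_{i_k}$ of quaternion algebras is Brauer equivalent to a quaternion algebra. Now, writing $\varphi_i$ for the norm form of $Q_i$ and using $I^3_q(F) = 0$, the subgroup $G$ of $I_q^2 F / I_q^3 F = I^2 F$ generated by the classes $[\varphi_i]$ is carried by the Clifford invariant $e_2 \colon I^2 F / I^3 F \to {}_2\Br(F)$ (an isomorphism, by Merkurjev's theorem) onto the subgroup $\langle Q_1, Q_2, Q_3 \rangle$ of ${}_2\Br(F)$. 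Thus an arbitrary $g \in G$ satisfies $e_2(g) = [Q_{i_1} \otimes_F \cdots \otimes_F Q_{i_k}] = [Q']$ for a suitable quaternion algebra $Q'$; since the norm form $\rho$ of $Q'$ is a quadratic $2$-fold Pfister form with $e_2(\rho) = [Q']$ and $e_2$ is injective while $I^3 F = 0$, we obtain $g = [\rho]$ in $I_q^2 F / I_q^3 F$. Hence every element of $G$ is represented by a quadratic $2$-fold Pfister form, so $S$ is tight.

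Finally, with $I^3_q(F) = 0$ and $S$ tight, \cref{triplinkeasier} applies directly and shows that $S$ is linked; since $S$ was arbitrary, the corollary follows. The step I expect to require the most care is the tightness verification, and within it the claim that a product of quaternion algebras cannot be a degree-$4$ division algebra --- this is the only place where the hypothesis $u(F) = 4$ is genuinely used, through the Albert-form description of $Q \otimes_F Q'$ --- together with the bookkeeping that matches Witt classes in $I^2 F$ with the corresponding Brauer classes via the injectivity of the Clifford invariant on $I^2 F / I^3 F$.
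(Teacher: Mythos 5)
Your proof is correct and follows the same route as the paper: verify that $I^3_q(F)=0$ and that every triple of quaternion algebras is tight, then apply \cref{triplinkeasier}. The only difference is that where the paper simply cites \cite[(39.1)]{EKM} for the fact that $u(F)=4$ makes $F$ linked (from which tightness of every triple follows), you supply a self-contained argument via the anisotropy of the Albert form and the injectivity of the Clifford invariant on $I^2F/I^3F$.
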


\begin{proof}  
Since ${u}(F)= 4$
we have that $F$ is linked by \cite[(39.1)]{EKM}. Hence for all triples $Q_1,Q_2$ and $Q_3$ of quaternion algebras over $F$, the set  $S=\{Q_1,Q_2,Q_3\}$  is tight. That $F$ is nonreal and ${u}(F)= 4$  implies that $I_q^3(F)=0$.  The result therefore follows from \cref{triplinkeasier}.
\end{proof}

\begin{ques}\label{end}
Let $F$ be a field with $\operatorname{char}(F) = 2$ and $u(F)= 4$.
Is every  set of three quaternion algebras over $F$ linked?
\end{ques}

Note that  if a similar result to \cref{Peyre} can be shown for fields of characteristic $2$, the same proof of \cref{triplink} could be used for these fields to give a positive answer to \cref{end}.

\section*{Acknowledgements} 
We  thank the referee for   useful suggestions that improved the clarity of the paper. 

The second author was supported by   
 \emph{Automorphism groups of locally finite trees} (G011012) with the Research Foundation, Flanders, Belgium (F.W.O.~Vlaanderen). %

\section*{Bibliography}
\bibliographystyle{amsalpha}

\end{document}